\documentclass{amsart}
\usepackage{latexsym,amsmath,amssymb}
\newtheorem{thm}{Theorem}[section]
\newtheorem{cor}[thm]{Corollary}
\newtheorem{exam}[thm]{Example}
\newtheorem{lem}[thm]{Lemma}
\newtheorem{prop}[thm]{Proposition}
\theoremstyle{definition}\newtheorem{defn}[thm]{Definition}
\theoremstyle{remark}
\newtheorem{rem}[thm]{Remark}
\numberwithin{equation}{section}

\begin{document}

\title[]
{Moore-Penrose inverse and applications to linear equations }

\author{\sc\bf Y. Estaremi and S. Shamsi }
\address{\sc Y. Estaremi and S. Shamsi}
\email{y.estaremi@gu.ac.ir}
\email{s.shamsi@gmail.com}

\address{Department of Mathematics and Computer Sciences,
Golestan University, Gorgan, Iran,}
\address{Department of Mathematics, Noor University, p. o. box: 19395-3697, Tehran, Iran.}

\thanks{}

\thanks{}

\subjclass[2010]{47B20,47B38}

\keywords{Conditional expectation, Moore-Penrose inverse, Variational Regularization.}

\date{}

\dedicatory{}

\commby{}

\begin{abstract}
In the present paper we investigate Moore-Penrose inverse and characteristic matrix of unbounded WCT operators on the Hilbert space $L^2(\mu)$. Also, we obtain some applications of the Moore-Penrose inverse of unbounded operators on the Hilbert space $\mathcal{H}$ to variational regularization problem. Moreover,some examples are provided to illustrate the applications of our results in linear equations and specially Fredholm integral equations.
\end{abstract}

\maketitle

\section{ \sc\bf Introduction and Preliminaries}
The theory of conditional expectation operators is a connection between measure theory and operator theory. Indeed for any $\sigma$-subalgebra of a $\sigma$-algebra in a measure space we can define a unique bounded operator on the space of measurable function spaces, named conditional expectation with respect to the chosen $\sigma$-subalgebra. Also, the class of composition of conditional expectation and multiplication operators contains a large class of linear operators(bounded and unbounded) like integral operator and weighted composition operators. The theory of weighted conditional type(WCT) operators have been studied by, for example Moy, \cite{mo}, De Pagter and Grobler \cite{gd}, Rao, \cite{rao}, Douglas, \cite{dou}, Lambert, \cite{lam} and Herron, \cite{her}. Specially, bounded WCT operators have been studied by many authors on $L^p$-spaces. Also, we initiated to study unbounded WCT operators on $L^2(\mu)$-spaces in \cite{yc} and on $L^p(\mu)$-spaces in \cite{ye1, xy}. Most of the results in the literature are almost completely pure. In this paper we have obtained that the Moore-Penrose inverse of WCT operators ($T$) at each single point $f\in L^2(\mu)$, is the minimal-norm solution of the linear equation $Tu=f$. Also, it is true for any unbounded densely defined operator $T$ on the Hilbert space $\mathcal{H}$.\\



Let $(X,\Sigma,\mu)$ be a $\sigma$-finite measure space. For any $\sigma$-finite subalgebra
$\mathcal{A}\subseteq
 \Sigma$, the $L^2$-space,
$L^2(X,\mathcal{A},\mu_{|_{\mathcal{A}}})$ is abbreviated  by
$L^2(\mathcal{A})$, and its norm is denoted by $\|.\|_2$. All
comparisons between two functions or two sets are to be
interpreted as holding up to a $\mu$-null set. The support of a
measurable function $f$ is defined as $S(f)=\{x\in X; f(x)\neq
0\}$. We denote the vector space of all (equivalence classes of)
almost everywhere finite valued measurable functions on $X$ by
$L^0(\Sigma)$.\\

\vspace*{0.3cm} For a $\sigma$-finite subalgebra
$\mathcal{A}\subseteq\Sigma$, the conditional expectation operator
associated with $\mathcal{A}$ is the mapping $f\rightarrow
E^{\mathcal{A}}f$, defined for all non-negative, measurable
function $f$ as well as for all $f\in L^2(\Sigma)$, where
$E^{\mathcal{A}}f$, by the Radon-Nikodym theorem, is the unique
$\mathcal{A}$-measurable function satisfying
$$\int_{A}fd\mu=\int_{A}E^{\mathcal{A}}fd\mu, \ \ \ (A\in \mathcal{A}).$$
As an operator on $L^{2}({\Sigma})$, $E^{\mathcal{A}}$ is
idempotent and $E^{\mathcal{A}}(L^2(\Sigma))=L^2(\mathcal{A})$. If
there is no possibility of confusion, we write $E(f)$ in place of
$E^{\mathcal{A}}(f)$. A detailed discussion of the properties
of conditional expectation may be found in \cite{rao}. Let $u\in L^0(\Sigma)$. Then $u$ is said to be
conditionable with respect to $E$ if $u\in\mathcal{D}(E):=\{f\in
L^0(\Sigma): E(|f|)\in L^0(\mathcal{A})\}$.  Throughout this paper we take $u$
and $w$ in $\mathcal{D}(E)$. We now define the class of operator under investigation.

\begin{defn}
Let $(X,\Sigma,\mu)$ be a $\sigma$-finite measure space and let $\mathcal{A}$ be a
$\sigma$-subalgebra of $\Sigma$, such that $(X,\mathcal{A},\mathcal{A})$ is also $\sigma$-finite. Suppose that $E$ is the corresponding conditional
expectation operator on $L^2(\Sigma)$ relative to $\mathcal{A}$. If $w,u \in L^0(\Sigma)$ such that $uf$ is conditionable and $wE(uf)\in L^2(\Sigma)$, for all $f\in L^2(\Sigma)$, then the corresponding weighted conditional type(WCT) operator is the linear
transformation $M_wEM_u:L^2(\Sigma)\rightarrow L^2(\Sigma)$ defined by $f\rightarrow wE(uf)$.\\
\end{defn}
In this paper we are concerned to unbounded WCT operators on the Hilbert space $L^2(\Sigma)$ and we will discuss Moore-Penrose inverse of these operators. Also, we give a spectral characterization of closed range WCT operators. Moreover, we find the characteristic matrix of WCT operators. Finally, we obtain that the value of any unbounded densely defined operator $T$ on the Hilbert space $\mathcal{H}$ at any single point $f\in \mathcal{H}$, is the minimal-norm solution of the linear equation $Tu=f$.

\section{\sc\bf Moore-Penrose inverse of unbounded WCT operators}
Let $u,w\in \mathcal{D}(E)$ and $T=M_wEM_u$ be the WCT operator on the Hilbert space $L^2(\mu)$. Here we need to recall conditional type Holder inequality on $L^2(\mu)$, it asserts that for every $f,g \in L^2(\mu)$, the following inequality holds:

$$| E(fg)|^2 \leq E(| f|^2)E(| g|^2).$$
For $f\in \mathcal{D}(M_wEM_u)\subseteq L^2(\mu)$, we have $Tf=M_wEM_u(f)=wE(uf)$. We know that $S(| w|)\subseteq S(E(| w|^2))$ and also by conditional type Holder inequality we have

$$| wE(fg)|^2 \leq | w|^2 E(| f|^2)E(| g|^2).$$
Since $S(| Tf|)\subseteq S(E(| Tf|^2)$, then we have the following lemma.
\begin{lem}\label{l2.1}
Let $S=S(E(| w|^2))$ and $G=S(E(| u|^2))$ and $T=M_wEM_u$ be the WCT operator on $L^2(\mu)$. Then $S(Tf)\subseteq S\cap G$.
\end{lem}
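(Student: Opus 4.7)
The plan is to establish the inclusion $S(Tf)\subseteq S\cap G$ by proving the two containments $S(Tf)\subseteq S$ and $S(Tf)\subseteq G$ separately, each relying on one of the two facts recalled in the paragraphs immediately preceding the lemma.

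For the first containment, I would start from the defining formula $Tf=wE(uf)$, which trivially gives $S(Tf)\subseteq S(w)=S(|w|)$. The excerpt has just reminded us that $S(|w|)\subseteq S(E(|w|^2))=S$, so chaining these two inclusions yields $S(Tf)\subseteq S$. This step is essentially bookkeeping once one invokes the stated fact; the underlying reason is simply the averaging identity $\int_A |w|^2 d\mu=\int_A E(|w|^2)d\mu$ for $A\in\mathcal{A}$, which forces $|w|=0$ wherever $E(|w|^2)=0$.

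For the second containment, the same formula $Tf=wE(uf)$ gives $S(Tf)\subseteq S(E(uf))$, so it suffices to show $S(E(uf))\subseteq S(E(|u|^2))=G$. Here I would apply the conditional H\"older inequality stated at the beginning of the section,
\[
|E(uf)|^2\leq E(|u|^2)\,E(|f|^2),
\]
which makes $E(uf)$ vanish pointwise a.e.\ on the set $\{E(|u|^2)=0\}=X\setminus G$. Hence $S(E(uf))\subseteq G$, and combining with the previous step we get $S(Tf)\subseteq G$.

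Intersecting the two inclusions gives $S(Tf)\subseteq S\cap G$, which is the desired conclusion. I do not anticipate any real obstacle: the only subtlety is to recognize that the two inclusions require two different tools, namely the general support-vs.-conditional-expectation fact applied to the outer multiplier $w$, and the conditional H\"older inequality applied to the inner factor $uf$ to control the support of $E(uf)$.
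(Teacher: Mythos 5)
Your proposal is correct and follows essentially the same route as the paper, which states the lemma without a separate proof precisely because it derives it from the two facts you use: the support inclusion $S(|w|)\subseteq S(E(|w|^2))$ and the conditional H\"older bound $|wE(uf)|^2\leq |w|^2E(|u|^2)E(|f|^2)$. Your only (harmless) difference is splitting the argument into the two containments $S(Tf)\subseteq S$ and $S(Tf)\subseteq G$ rather than reading both off the single combined inequality.
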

Let $T=M_wEM_u$ be the WCT operator on $L^2(\mu)$, i.e.,
 $$T=M_wEM_u:\mathcal{D}(T)\subseteq L^2(\mu)\rightarrow \mathcal{R}(T)\subseteq L^2(\mu).$$
 As we have studied unbounded WCT operators in \cite{yc}. The WCT operator $T$ is densely defined on $L^2(\mu)$ if and only if $E(| w|^2)E(| u|^2)<\infty$, a.e., $\mu$. This gives us that the WCT operator $M_wEM_u$ is a densely defined operator on $L^2(\mu)$ if and only if the WCT operator $M_uEM_w$ is a densely defined operator on $L^2(\mu)$. Also, for densely defined operator $T=M_wEM_u$ on $L^2(\mu)$, we have $T^*=M_{\bar{u}}EM_{\bar{w}}$. So by the above observations we get that $T=M_wEM_u$ is densely defined if and only if $T^*$ is densely defined.
  This implies that if $T=M_wEM_u$ is a densely defined operator on $L^2(\mu)$, then it is closed. And consequently for densely defined operator $T=M_wEM_u$, we have $T^{**}=T$. Therefore, if $T=M_wEM_u$ and $E(| w|^2)E(| u|^2)< \infty$, \ \ a.e., $\mu$, then $T$ and $T^*$ both are closed. Now we provide some technical lemmas for later use.

  \begin{lem}\label{l2.2}
  Let $EM_u$ be a linear operator on $L^2(\mu)$ such that $EM_u(f)=E(uf)$ for $f\in \mathcal{D}(EM_u)$. Then $L^2(\mathcal{A})\subseteq\mathcal{D}(EM_u)$ if and only if $\int_A | E(u)|^2 d\mu<\infty$, for all $A\in \mathcal{A}$, with $0<\mu(A)<\infty$.
  \end{lem}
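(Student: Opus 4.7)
The whole proof rests on the averaging/pull-out property of conditional expectation: for any $\mathcal{A}$-measurable $h$ and $\Sigma$-measurable $g$ with $hg$ conditionable (in the sense of $\mathcal{D}(E)$), one has $E(hg) = h\,E(g)$. Both implications reduce to applying this identity with $h = f$ for $f \in L^2(\mathcal{A})$, after which the problem becomes one about the ordinary multiplier $M_{E(u)}$ acting on $L^2(\mathcal{A})$.

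For the forward direction ($\Rightarrow$), suppose $L^2(\mathcal{A})\subseteq \mathcal{D}(EM_u)$. Given any $A\in\mathcal{A}$ with $0<\mu(A)<\infty$, the indicator $\chi_A$ is $\mathcal{A}$-measurable and lies in $L^2(\mathcal{A})$, hence in $\mathcal{D}(EM_u)$, so $E(u\chi_A)\in L^2(\mu)$. Pulling the $\mathcal{A}$-measurable factor $\chi_A$ out of the conditional expectation gives $E(u\chi_A) = \chi_A E(u)$. Therefore
$$\int_{A}|E(u)|^2\,d\mu \;=\; \|\chi_A E(u)\|_{2}^{2} \;=\; \|E(u\chi_A)\|_{2}^{2} \;<\;\infty,$$
which is exactly the stated condition. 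This step is essentially immediate once pull-out is invoked on a single indicator.

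For the converse ($\Leftarrow$), assume the integrability condition and fix $f\in L^2(\mathcal{A})$. Because $u\in \mathcal{D}(E)$, the function $E(|u|)$ is a.e.\ finite; applying pull-out to the nonnegative product $|u||f|$ gives $E(|uf|) = |f|\,E(|u|)<\infty$ a.e., so $uf$ is conditionable and $E(uf) = f\,E(u)$. The membership $f\in\mathcal{D}(EM_u)$ thus reduces to checking $fE(u)\in L^2(\mu)$. Choosing a $\sigma$-finite exhaustion $X = \bigcup_n B_n$ with $B_n\in\mathcal{A}$ and $\mu(B_n)<\infty$, the hypothesis applied to each $B_n$ gives local control of $|E(u)|^2$, which combined with $f \in L^2(\mathcal{A})$ and monotone convergence yields the desired global $L^2$ estimate for $fE(u)$.

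I expect the forward direction to be straightforward (one indicator, one pull-out). The main technical point sits in the converse, specifically in the last step: transferring the finite-measure local integrability of $|E(u)|^2$ into a global $L^2$ bound on the product $fE(u)$ for an arbitrary $f\in L^2(\mathcal{A})$. Managing the $\sigma$-finite structure of $\mathcal{A}$ carefully, and justifying the pull-out identity $E(uf)=fE(u)$ in $L^2$ rather than merely a.e., is where the real work lies.
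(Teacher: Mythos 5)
Your forward direction is correct and is exactly the paper's argument: take $f=\chi_A$ and use the pull-out identity $E(u\chi_A)=\chi_A E(u)$ to identify $\int_A|E(u)|^2\,d\mu$ with $\|EM_u(\chi_A)\|_2^2$. No issues there.

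The converse, however, contains a genuine gap, located precisely at the step you yourself flag as ``where the real work lies.'' Knowing $\int_{B_n}|E(u)|^2\,d\mu<\infty$ for each member of an exhaustion gives no control on $\int_X|f|^2|E(u)|^2\,d\mu$ for a general $f\in L^2(\mathcal{A})$: monotone convergence only says $\int_X|fE(u)|^2\,d\mu=\lim_n\int_{B_n}|fE(u)|^2\,d\mu$, and the individual terms need not be finite (on $B_n$ you have $E(u)\in L^2$ and $f\in L^2$, whose product is merely $L^1$), nor is the limit controlled. In fact the implication you are trying to prove is false as stated: take $X=[0,1]$ with Lebesgue measure, $\mathcal{A}=\Sigma$ (so $E$ is the identity and $E(u)=u$) and $u(x)=x^{-1/4}$. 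Then $\int_A|E(u)|^2\,d\mu\leq\int_0^1x^{-1/2}\,dx<\infty$ for every $A$, yet $f=u\in L^2(\mathcal{A})$ gives $uf=x^{-1/2}\notin L^2$, so $L^2(\mathcal{A})\not\subseteq\mathcal{D}(EM_u)$. Since an everywhere-defined multiplication operator is closed and hence bounded, the correct condition for $L^2(\mathcal{A})\subseteq\mathcal{D}(M_{E(u)})$ is $E(u)\in L^\infty(\mathcal{A})$, which is strictly stronger than local square-integrability. For what it is worth, the paper's own proof makes the same unjustified leap (it passes from indicators to all of $L^2(\mathcal{A})$ by appealing to density of simple functions, but the domain of an unbounded multiplication operator is not determined by a dense subset). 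So the missing idea is not a trick you overlooked: the ``only if'' half is the true content, and the ``if'' half requires either the stronger hypothesis $E(u)\in L^\infty(\mathcal{A})$ or a weaker conclusion, e.g.\ that all simple functions in $L^2(\mathcal{A})$ lie in $\mathcal{D}(EM_u)$, so that $\mathcal{D}(EM_u)\cap L^2(\mathcal{A})$ is dense in $L^2(\mathcal{A})$.
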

  \begin{proof}
  $L^2(\mathcal{A})\subseteq \mathcal{D}(EM_u)$ if and only if $\int_X| E(u)f|^2d\mu=\int_X|E(uf)|^2d\mu<\infty$, for every $f\in L^2(\mathcal{A})$. Since $L^2(\mathcal{A})$ is $\sigma$-finite, then simple functions are dense in $L^2(\mathcal{A})$. Therefore we have $\int_X|E(u)f|^2d\mu<\infty$, for all $f\in L^2(\mathcal{A})$ if and only if $\int_A |E(u)|^2d\mu<\infty$, for all $A\in \mathcal{A}$ with $\mu(A)<\infty$.
  \end{proof}

\begin{lem}\label{l2.3}
If the measure space $(X,\Sigma,\mu)$ is $\sigma$-finite and $f$ is a $\Sigma$-measurable function such that $f<\infty$, a.e., $\mu$, then there exists an increasing sequence of $\Sigma$-measurable sets $\{B_n\}_{n\in \mathbb{N}}$ such that $f.\chi_{B_n}<n$, $\mu(B_n)<\infty$ and $X=\cup^{\infty}_{n=1} B_n$.
\end{lem}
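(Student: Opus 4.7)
The proof is a standard truncation argument combining two familiar exhaustions, so I do not expect any serious obstacle; the plan is just to line up the two sequences carefully.

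First I would invoke $\sigma$-finiteness of $(X,\Sigma,\mu)$ to fix an increasing sequence $\{X_n\}_{n\in\mathbb{N}}$ of $\Sigma$-measurable sets with $\mu(X_n)<\infty$ and $X=\bigcup_{n=1}^{\infty} X_n$. Independently, since $f$ is $\Sigma$-measurable and finite $\mu$-a.e., the sets
\[
A_n := \{x\in X : f(x) < n\}
\]
lie in $\Sigma$, are increasing in $n$, and satisfy $\bigcup_n A_n = \{x\in X : f(x)<\infty\}$, which equals $X$ up to a $\mu$-null set (and hence, under the paper's convention that set equalities are modulo null sets, equals $X$).

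Next I would define $B_n := A_n \cap X_n$. Measurability and monotonicity are immediate from the corresponding properties of $\{A_n\}$ and $\{X_n\}$; finiteness of measure follows from $\mu(B_n)\le\mu(X_n)<\infty$; and the pointwise bound $f\cdot\chi_{B_n}<n$ holds by construction since $B_n\subseteq A_n$.

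The only point worth recording is the covering property. Given any $x\in X$ with $f(x)<\infty$, pick $n_1$ with $f(x)<n_1$ and $n_2$ with $x\in X_{n_2}$, and set $n=\max(n_1,n_2)$; by monotonicity of both sequences, $x\in A_n\cap X_n=B_n$. Thus $X\setminus\bigcup_n B_n$ is contained in the $\mu$-null set $\{f=\infty\}$, which under the paper's null-set convention gives $X=\bigcup_{n=1}^{\infty} B_n$, completing the proof.
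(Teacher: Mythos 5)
Your proof is correct and is the standard argument: intersecting the $\sigma$-finite exhaustion $\{X_n\}$ with the sublevel sets $A_n=\{f<n\}$ and handling the null set $\{f=\infty\}$ via the paper's modulo-null-sets convention. The paper itself gives no proof (it declares the lemma ``an easy exercise''), and your argument is exactly the one intended, with no gaps.
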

\begin{proof}
It is an easy exercise.
\end{proof}

\paragraph{$(\bigstar)$ Let $(X, \Sigma, \mu)$ be a measure space such that if $X=\cup_{n\in \mathbb{N}}X_n$ and $\mu(X_n)<\infty$, for all $n$, in which $\{X_n\}$ is an increasing sequence, then for each $A\in \Sigma$ with $\mu(A)<\infty$, there exists $n\in \mathbb{N}$ such that $A\subseteq X_n$. }

\begin{prop}\label{p2.4}
Let $(X,\Sigma, \mu)$ be a $\sigma$-finite measure space that satisfies the condition $(\bigstar)$. If the WCT operator $M_wEM_u$ is densely defined on $L^2(\mu)$, then $L^2(\mathcal{A})\subseteq \mathcal{D}(EM_{wu})$.
\end{prop}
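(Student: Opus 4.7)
The plan is to reduce the statement, via Lemma \ref{l2.2} applied with $u$ replaced by $wu$, to the assertion that
\[
\int_A |E(wu)|^2\,d\mu < \infty \quad \text{for every } A \in \mathcal{A} \text{ with } 0<\mu(A)<\infty.
\]
So everything comes down to controlling $|E(wu)|^2$ on sets of finite measure in $\mathcal{A}$.

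First I would use the hypothesis that $T=M_wEM_u$ is densely defined, which (as recalled in the paragraph preceding Lemma \ref{l2.2}) is equivalent to $E(|w|^2)E(|u|^2)<\infty$ almost everywhere. Combining this with the conditional-type H\"older inequality
\[
|E(wu)|^2 \le E(|w|^2)\,E(|u|^2),
\]
we immediately get that $|E(wu)|^2<\infty$ a.e.\ (and in particular $wu$ is conditionable, so $EM_{wu}$ makes sense as an operator with nonzero domain).

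Next, since $E(wu)$ is $\mathcal{A}$-measurable and $(X,\mathcal{A},\mu_{|\mathcal{A}})$ is $\sigma$-finite, I would apply Lemma \ref{l2.3} to the a.e.\ finite $\mathcal{A}$-measurable function $|E(wu)|^2$ on the space $(X,\mathcal{A},\mu_{|\mathcal{A}})$. This produces an increasing sequence $\{B_n\}_{n\in\mathbb{N}}\subseteq \mathcal{A}$ with $X=\bigcup_n B_n$, $\mu(B_n)<\infty$, and $|E(wu)|^2\chi_{B_n}<n$.

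Finally, the condition $(\bigstar)$ applied to the increasing cover $\{B_n\}$ tells us that any $A\in\mathcal{A}\subseteq\Sigma$ with $\mu(A)<\infty$ satisfies $A\subseteq B_n$ for some $n$, so
\[
\int_A |E(wu)|^2\,d\mu \le \int_{B_n}|E(wu)|^2\,d\mu \le n\,\mu(B_n)<\infty,
\]
which is exactly the criterion from Lemma \ref{l2.2}. The main (and really only) subtle point is the use of $(\bigstar)$: without it, a finite-measure $A\in\mathcal{A}$ need not be absorbed by any single level set $B_n$, and the bound on $|E(wu)|^2$ obtained via Lemma \ref{l2.3} would not translate into an integrability statement on arbitrary $A$. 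Everything else is a routine assembly of the conditional H\"older inequality, the density criterion for $T$, and Lemma \ref{l2.2}.
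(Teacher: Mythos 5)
Your proposal is correct and follows essentially the same route as the paper's own proof: reduce via Lemma \ref{l2.2} (with $u$ replaced by $wu$) to finiteness of $\int_A |E(wu)|^2\,d\mu$, use density of the domain plus the conditional H\"older inequality to get $|E(wu)|^2<\infty$ a.e., then combine Lemma \ref{l2.3} with the absorption property $(\bigstar)$ to bound the integral on any finite-measure $A\in\mathcal{A}$. No gaps; your remark on where $(\bigstar)$ is genuinely needed matches the role it plays in the paper.
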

\begin{proof}
Let $M_wEM_u$ be densely defined on $L^2(\mu)$. Then we have $E(| u|^2)E(| w|^2)<\infty$, a.e., $\mu$. Hence by conditional type Holder inequality we get that $| E(uw)|^2<\infty$, a.e., $\mu$. Now by Lemma \ref{l2.3} we get that there is an increasing sequence $\{B_n\}_{n\in \mathbb{N}}\subseteq\mathcal{A}$ such that $| E(uw)|^2\chi_{B_n}<n$, $\mu(B_n)<\infty$ and $X=\cup^{\infty}_{n=1}B_n$. Let $A\in \mathcal{A}$ such that $\mu(A)<\infty$. Then by $(\bigstar)$, there exists $n\in \mathbb{N}$ for which $A\subseteq B_n$. This implies that

$$\int_A | E(uw)|^2d\mu\leq\int_{B_n}| E(uw)|^2d\mu\leq n\mu(B_n)<\infty.$$
Thus by Lemma \ref{l2.2} we get that $L^2(\mathcal{A})\subseteq \mathcal{D}(EM_{uw})$.
\end{proof}
Now we have the following Remark.
\begin{rem}\label{r2.5}
If the WCT operator $M_wEM_u$ is densely defined on $L^2(\mu)$ such that the underlying measure space satisfies the condition $(\bigstar)$, then by Corollary 2.11 of \cite{xy} and the Proposition \ref{p2.4} we get that
$$\sigma(M_wEM_u)\cup\{0\}=\text{ess.range}(E(uw))\cup\{0\}.$$
\end{rem}
For the Hilbert spaces $\mathcal{H}_1$ and $\mathcal{H}_2$, the set of all closed linear operators $T$, with $\mathcal{D}(T)\subseteq \mathcal{H}_1$ and $\mathcal{R}(T)\subseteq \mathcal{H}_2$, is denoted by $\mathcal{C}(\mathcal{H}_1,\mathcal{H}_2)$. Also, a linear operator $T\in \mathcal{C}(\mathcal{H}_1,\mathcal{H}_2)$ is called regular if it is densely defined. As we stated in the beginning of this section every densely defined WCT operator is closed, so every densely defined WCT operator is regular.
Here we recall the definition of the Moore-Penrose inverse and some related concepts.
\begin{defn}\label{d2.7}
Let $\mathcal{H}_1$ and $\mathcal{H}_2$ be Hilbert spaces and $T\in \mathcal{C}(\mathcal{H}_1,\mathcal{H}_2)$ be a densely defined operator. Then there exists a unique densely defined operator $T^{\dagger}\in \mathcal{C}(\mathcal{H}_1,\mathcal{H}_2)$, for which $\mathcal{D}(T^{\dagger})=\mathcal{R}(T)\oplus \mathcal{R}(T)^{\perp}$ and has the following properties:\\
1) $TT^{\dagger}y=P_{\overline{\mathcal{R}(T)}}y, \ \ \ \ \ \ \ \ y\in \mathcal{D}(T^{\dagger})$;\\
2) $T^{\dagger}Tx=P_{\mathcal{N}(T)^{\perp}}x, \ \ \ \ \ \ \ \ x\in\mathcal{D}(T)$;\\
3) $\mathcal{N}(T^{\dagger})=\mathcal{R}(T)^{\perp}$.\\
The unique operator $T^{\dagger}$ is called the Moore-Penrose inverse of $T$.
\end{defn}
In \cite{js} the authors have obtained the Moore-Penrose inverse of the bounded WCT operator $T=M_wEM_u:L^2(\mu)\rightarrow L^2(\mu)$, in the special case of $w,u\geq 0$, as follow:

 $$T^{\dagger}f=\frac{\chi_{K}}{E(u^2)E(w^2)}M_uEM_w(f), \ \ \ \ \ \ f\in \mathcal{D}(T^{\dagger}),$$

in which $K=S(E(w))\cap S(E(u))$. But in the case that $w$ and $u$ are complex valued measurable functions for closed range bounded WCT operator $T=M_wEM_u$ on $L^2(\mu)$ we have

$$T^{\dagger}=M_{\frac{\chi_{S}}{E(|u|^2)E(|w|^2)}}M_{\bar{u}}EM_{\bar{w}},$$
in which $S=S(E(|u|^2))\cap S(E(|w|^2))$. Since $\chi_STf=Tf$ and $\chi_ST^*g=T^*g$, for $f,g\in L^2(\mu)$, then easily one can check that $T^{\dagger}$ satisfies the conditions of the definition of Moore-Penrose inverse of closed range bounded operators, i.e.,

$$TT^{\dagger}T=T, \ \ \ T^{\dagger}TT^{\dagger}=T^{\dagger}, \ \ \ (T^{\dagger}T)^*=T^{\dagger}T, \ \ \ (TT^{\dagger})^*=TT^{\dagger}.$$

Moreover, if $T=M_wEM_u$ is an unbounded densely defined operator on $L^2(\mu)$, then it is closed. So by definition \ref{d2.7} there exists a unique closed densely defined operator $T^{\dagger}$ that satisfies with conditions of the Definition \ref{d2.7}. Similar to the bounded case, for unbounded densely defined WCT operator $T=M_wEM_u$, easily we get that

$$T^{\dagger}f=\frac{\chi_S}{E(|u|^2)E(|w|^2)}\bar{u}E(\bar{w}f), \ \ \ \ \ f\in \mathcal{D}(T^{\dagger}).$$
Indeed, $T^{\dagger}=M_{\frac{\chi_S}{E(|u|^2)E(|w|^2)}}T^*$ and so
$\mathcal{N}(T^{\dagger})=\mathcal{N}(T^*)=\mathcal{R}(T)^{\perp}$. Also for $f\in \mathcal{D}(T^{\dagger})$ we have
\begin{align*}
T^{\dagger}T(T^{\dagger}f)&=\frac{\chi_S}{(E(|u|^2))^2(E(|w|^2))^2}M_{\bar{u}}EM_{\bar{w}}M_wEM_uM_{\bar{u}}EM_{\bar{w}}f\\
&=\frac{E(|u|^2)E(|w|^2)\chi_S}{(E(|u|^2))^2(E(|w|^2))^2}M_{\bar{u}}EM_{\bar{w}}f\\
&=\frac{\chi_S}{E(|u|^2)E(|w|^2)}M_{\bar{u}}EM_{\bar{w}}f\\
&=T^{\dagger}f.
\end{align*}
This implies that $T^{\dagger}TT^{\dagger}=T^{\dagger}$ and $(T^{\dagger}T)^2=T^{\dagger}T$. Also, by definition, it is clear that $(T^{\dagger}T)^*=T^{\dagger}T$. Therefore $T^{\dagger}T=P_{\overline{\mathcal{R}(T^{\dagger})}}$. Similarly we get that $TT^{\dagger}=P_{\overline{\mathcal{R}(T)}}$. Therefor we have the following theorem.
\begin{thm}\label{t2.9}
Let $T=M_wEM_u$ be unbounded densely defined WCT operator on $L^2(\mu)$. Then the Moore-Penrose inverse of $T$ is as follow:
$$T^{\dagger}=M_{\frac{\chi_S}{E(|u|^2)E(|w|^2)}}M_{\bar{u}}EM_{\bar{w}}, \text{in which} \ \ S=S(E(|u|^2))\cap S(E(|w|^2)).$$
\end{thm}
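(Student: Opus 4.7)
The plan is to take the displayed formula on the right-hand side as an \emph{ansatz}, call it $R$, and verify it satisfies all three conditions of Definition \ref{d2.7}; uniqueness of the Moore--Penrose inverse will then force $R = T^{\dagger}$. Concretely, I would set $R := M_{g}\, T^{*}$ with $g := \chi_{S}/(E(|u|^{2})E(|w|^{2}))$, extended by $0$ off $S$, on the natural domain $\mathcal{D}(R)=\mathcal{R}(T)\oplus \mathcal{R}(T)^{\perp}$.

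The whole computation rides on one structural identity, the pull-out property $E(h\varphi)=hE(\varphi)$ for $\mathcal{A}$-measurable $h$. It gives
$$T^{*}Tf \;=\; \bar{u}\,E(|w|^{2})\,E(uf), \qquad TT^{*}h \;=\; w\,E(|u|^{2})\,E(\bar{w}h),$$
and, by one further iteration, $T^{*}TT^{*} = M_{E(|u|^{2})E(|w|^{2})}T^{*}$ together with the symmetric $TT^{*}T = M_{E(|u|^{2})E(|w|^{2})}T$. Lemma \ref{l2.1} then ensures that both $Tf$ and $T^{*}h$ are supported in $S$, so multiplication by $g$ is a genuine left inverse of $M_{E(|u|^{2})E(|w|^{2})}$ on those ranges. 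The identities $RTR=R$ and $TRT=T$ then come out in one line each; the first is precisely the chain computation already displayed in the excerpt, and the second is its mirror image.

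To upgrade these algebraic identities to the projection statements of Definition \ref{d2.7}, I would check that $RT$ and $TR$ are self-adjoint. Both are WCT-type operators whose outer $\mathcal{A}$-measurable weights are real, so the adjoint formula $T^{*}=M_{\bar{u}}EM_{\bar{w}}$ (and $T^{**}=T$, valid since $T$ is closed) immediately yields $(RT)^{*}=RT$ and $(TR)^{*}=TR$. Combined with the idempotency that follows from $RTR=R$ and $TRT=T$, each is an orthogonal projection. Identifying their ranges with $\overline{\mathcal{R}(T)}$ and with $\mathcal{N}(T)^{\perp}=\overline{\mathcal{R}(T^{*})}$ (again using closedness of $T$) produces conditions (1) and (2). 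For (3), since $g>0$ exactly on $S$ and $T^{*}h$ is supported on $S$, $Rh=0\iff T^{*}h=0$, whence $\mathcal{N}(R)=\mathcal{N}(T^{*})=\mathcal{R}(T)^{\perp}$.

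The main obstacle I expect is not the algebra but the domain bookkeeping imposed by unboundedness. I have to confirm that $\mathcal{D}(R)=\mathcal{R}(T)\oplus\mathcal{R}(T)^{\perp}$ agrees with the apparently larger set $\{h\in \mathcal{D}(T^{*}): gT^{*}h\in L^{2}(\mu)\}$, and that every composition $RT$, $TR$, $RTR$, $TRT$ makes sense on the dense set where the axioms need to hold. The two safety nets are the support identity $\chi_{S}T^{*}h=T^{*}h$ from Lemma \ref{l2.1} and the closedness of $T$ noted earlier in the section; together they keep the domains aligned so that the pull-out identity can be applied termwise without producing ill-defined multiplications.
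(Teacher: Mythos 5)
Your proposal is correct and takes essentially the same route as the paper: the paper also defines the candidate $M_{\chi_S/(E(|u|^2)E(|w|^2))}T^{*}$, uses the pull-out identity $T^{*}TT^{*}=M_{E(|u|^2)E(|w|^2)}T^{*}$ together with the support relation $\chi_S T^{*}h=T^{*}h$ to get $T^{\dagger}TT^{\dagger}=T^{\dagger}$, checks self-adjointness and idempotency of $T^{\dagger}T$ and $TT^{\dagger}$ to identify them as the projections in Definition~\ref{d2.7}, and notes $\mathcal{N}(T^{\dagger})=\mathcal{N}(T^{*})=\mathcal{R}(T)^{\perp}$. Your added attention to the domain bookkeeping is, if anything, more careful than the paper's treatment.
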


In the next Theorem we give an equivalent condition for unbounded WCT operators to have closed range.
\begin{thm}\label{t2.6}
 If WCT operator $M_wEM_u$ is densely defined on $L^2(\mu)$ such that the underlying measure space satisfies with ($\bigstar$) condition, then $\mathcal{R}(M_wEM_u)$ is closed if and only if $E(|u|^2)E(|w|^2)\geq r$, $\mu$, a.e., for some $r>0$.
\end{thm}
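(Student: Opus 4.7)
The plan is to reduce the question to a boundedness statement for the Moore--Penrose inverse. The key classical fact is that for a closed, densely defined operator $T$ between Hilbert spaces, $\mathcal{R}(T)$ is closed if and only if $T^{\dagger}$ is bounded. Since Theorem \ref{t2.9} furnishes the explicit formula
$$T^{\dagger}=M_{\frac{\chi_S}{E(|u|^2)E(|w|^2)}}\,M_{\bar{u}}\,E\,M_{\bar{w}},$$
the theorem follows once we identify exactly when this operator is bounded on $L^2(\mu)$.

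The first step is to recognize $T^{\dagger}$ as itself a WCT operator. The scalar factor $\chi_S/(E(|u|^2)E(|w|^2))$ is $\mathcal{A}$-measurable, so it can be pulled inside $E$ and absorbed into the multiplier on the right, giving $T^{\dagger}=M_{w'}EM_{u'}$ with $w'=\bar{u}$ and $u'=\chi_S\bar{w}/(E(|u|^2)E(|w|^2))$. I would then invoke the standard $L^2$ boundedness criterion for WCT operators --- obtained from the conditional Cauchy--Schwarz inequality recalled at the start of this section --- which says that $M_{w'}EM_{u'}$ is bounded on $L^2(\mu)$ if and only if $E(|w'|^2)E(|u'|^2)\in L^{\infty}(\mu)$. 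A direct computation, using $\mathcal{A}$-measurability of the scalar factor, gives
$$E(|w'|^2)\,E(|u'|^2)=E(|u|^2)\cdot\frac{\chi_S\,E(|w|^2)}{\bigl(E(|u|^2)E(|w|^2)\bigr)^2}=\frac{\chi_S}{E(|u|^2)E(|w|^2)}.$$
Hence $T^{\dagger}$ is bounded precisely when the right-hand side lies in $L^{\infty}(\mu)$, i.e.\ when $E(|u|^2)E(|w|^2)$ admits a positive lower bound $r>0$ on $S$. Both directions of the theorem then follow at once: closed range of $T$ forces $T^{\dagger}$ bounded and so produces the lower bound $r$, while conversely the lower bound yields $\|T^{\dagger}\|\leq r^{-1/2}$ and hence closed range via the same classical equivalence.

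The main obstacle I anticipate is the careful transfer of the equivalence "$\mathcal{R}(T)$ closed $\Leftrightarrow$ $T^{\dagger}$ bounded" from the bounded to the unbounded densely defined setting: one must verify that $T^{\dagger}$ being bounded on its natural domain $\mathcal{R}(T)\oplus\mathcal{R}(T)^{\perp}$ forces this domain to coincide with $L^{2}(\mu)$, which is exactly the condition for $\mathcal{R}(T)$ to be closed. A secondary (and more notational) point is the interpretation of the stated inequality off the set $S=S(E(|u|^2))\cap S(E(|w|^2))$: by Lemma \ref{l2.1} the range of $T$ is supported in $S$, so the substantive content of $E(|u|^2)E(|w|^2)\geq r$ lives on $S$, while off $S$ both $u$ and $w$ essentially vanish and the inequality must be read with that understanding.
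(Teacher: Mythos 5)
Your proof is correct, but it takes a genuinely different route from the paper's. The paper argues spectrally: it computes $T^*T=M_{\bar{u}E(|w|^2)}EM_u$, uses Remark \ref{r2.5} (this is precisely where the $(\bigstar)$ hypothesis enters, through Proposition \ref{p2.4}) to identify $\sigma(T^*T)\cup\{0\}$ with $\text{ess.range}(E(|u|^2)E(|w|^2))\cup\{0\}$, and then invokes Theorem 3.3 of \cite{kn}, which says $\mathcal{R}(T)$ is closed iff $\sigma(T^*T)\subseteq[r,\infty)\cup\{0\}$ for some $r>0$. You instead route everything through Proposition \ref{p2.8} (closed range $\Leftrightarrow$ $T^{\dagger}$ bounded), the explicit formula of Theorem \ref{t2.9}, and the $L^2$-boundedness criterion for WCT operators from \cite{ej1}; your rewriting of $T^{\dagger}$ as $M_{w'}EM_{u'}$ and the computation $E(|w'|^2)E(|u'|^2)=\chi_S/(E(|u|^2)E(|w|^2))$ are correct. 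This is in fact exactly the argument the paper itself uses for the corollary that follows Proposition \ref{p2.8} (boundedness of $T^{\dagger}$ and the value of $\|T^{\dagger}\|$), so your route unifies the theorem with that corollary, and it appears not to use the $(\bigstar)$ condition at all, whereas the paper's spectral route genuinely needs it. Both arguments share the same imprecision at the final step, which you rightly flag: what one actually proves is that $T^{\dagger}$ is bounded iff $E(|u|^2)E(|w|^2)\geq r$ on $S$ (equivalently $\text{ess.range}(E(|u|^2)E(|w|^2))\subseteq\{0\}\cup[r,\infty)$), and this coincides with the stated inequality ``a.e.\ on $X$'' only when $\mu(X\setminus S)=0$; the paper's proof silently makes the same identification when it passes from the essential-range inclusion to ``$E(|u|^2)E(|w|^2)\geq r$ a.e.''
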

\begin{proof} By the Remark \ref{r2.5} we have $\sigma(M_wEM_u)\cup\{0\}=\text{ess.range}(E(|u|^2)E(|w|^2))\cup\{0\}$. Also, for $T=M_wEM_u$ we have $T^*T=M_{\bar{u}E(|w|^2)}EM_u$. Therefore by Theorem 3.3 of \cite{kn} we get that $\mathcal{R}(T)$ is closed if and only if $\sigma(M_{\bar{u}E(|w|^2)}EM_u)\subseteq [r,\infty)\cup\{0\}$, for some $r>0$. This implies that $\mathcal{R}(T)$ is closed if and only if $\text{ess.range}(E(|u|^2)E(|w|^2)\subseteq [r,\infty)\cup \{0\}$, for some $r>0$, if and only if $E(|u|^2)E(|w|^2)\geq r$, $\mu$, a.e., for some $r>0$.
\end{proof}
 Here we recall some results of \cite{bi} as the next proposition, in which $\mathcal{C}(T)=\mathcal{D}(T)\cap\mathcal{N}(T)^{\perp}$ and the reduced minimum modulus of $T$ is defined by $\gamma(T)=\inf\{\|Tx\|:x\in \mathcal{C}(T), \ \ \|x\|=1\}$.
\begin{prop}\label{p2.8}\cite{bi}
Let $\mathcal{H}_1$ and $\mathcal{H}_2$ be Hilbert spaces and $T:\mathcal{H}_1\rightarrow \mathcal{H}_2$ be a closed operator. Then the following are equivalent.\\
\begin{enumerate}
 \item $\mathcal{R}(T)$ is closed.
\item $\mathcal{R}(T^*)$ is closed.
\item $T_0=T|_{\mathcal{C}(T)}$ has a bounded inverse.
\item $\gamma(T)>0$.
\item $T^{\dagger}$ is bounded.
\item $\gamma(T)=\gamma(T^*)$.
\item $\mathcal{R}(T^*T)$ is closed.
\item $\mathcal{R}(TT^*)$ is closed.
\end{enumerate}
\end{prop}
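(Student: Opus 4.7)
The proposition assembles classical equivalences attributed to \cite{bi}; the plan is to organize them into a core cluster $(1)\Leftrightarrow(3)\Leftrightarrow(4)\Leftrightarrow(5)$, adjoin the closed range theorem to obtain $(1)\Leftrightarrow(2)$, and then reduce $(7)$, $(8)$, $(6)$ to the core.

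First I would analyze the injective restriction $T_0 = T|_{\mathcal{C}(T)}$. Since $T$ is closed, $\mathcal{N}(T)$ is a closed subspace of $\mathcal{H}_1$, and every $x \in \mathcal{D}(T)$ decomposes orthogonally as $x = x_1 + x_2$ with $x_1 \in \mathcal{C}(T)$ and $x_2 \in \mathcal{N}(T) \cap \mathcal{D}(T)$. Consequently $\mathcal{R}(T_0) = \mathcal{R}(T)$, and $T_0$ is an injective closed operator from $\mathcal{C}(T)$ onto $\mathcal{R}(T)$. The equivalence $(3)\Leftrightarrow(4)$ is then immediate from the definition of $\gamma(T)$, since boundedness of $T_0^{-1}$ amounts to an inequality $\|T_0 x\| \geq c\|x\|$ on $\mathcal{C}(T)$. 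For $(1)\Rightarrow(3)$, closedness of $\mathcal{R}(T)$ turns $\mathcal{R}(T)$ into a Hilbert space on which the closed operator $T_0^{-1}$ is everywhere defined, so the closed graph theorem yields boundedness. For $(3)\Rightarrow(1)$, the bound $\|T_0 x\| \geq c \|x\|$ combined with the closedness of $T_0$ shows that every limit $y = \lim T_0 x_n$ lies in $\mathcal{R}(T_0) = \mathcal{R}(T)$. The equivalence $(4)\Leftrightarrow(5)$ follows because $T^{\dagger}$ vanishes on $\mathcal{R}(T)^{\perp}$ and restricts on $\mathcal{R}(T)$ to $T_0^{-1}$, so $T^{\dagger}$ is bounded iff $T_0^{-1}$ is.

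For $(1)\Leftrightarrow(2)$ I would invoke Banach's closed range theorem for closed densely defined Hilbert space operators. For $(7)\Leftrightarrow(1)$, the inclusion $\mathcal{R}(T^*T) \subseteq \mathcal{R}(T^*)$ together with $\overline{\mathcal{R}(T^*T)} = \mathcal{N}(T^*T)^{\perp} = \mathcal{N}(T)^{\perp} = \overline{\mathcal{R}(T^*)}$ forces closedness of $\mathcal{R}(T^*T)$ to imply closedness of $\mathcal{R}(T^*)$. Conversely, assuming $\mathcal{R}(T)$ is closed and $y = T^*z$ with $z \in \mathcal{D}(T^*)$, one decomposes $z = Tu + z_2$ with $u \in \mathcal{C}(T)$ (using that $\mathcal{R}(T)$ is closed) and $z_2 \in \mathcal{R}(T)^{\perp} = \mathcal{N}(T^*)$; then $Tu = z - z_2 \in \mathcal{D}(T^*)$ places $u \in \mathcal{D}(T^*T)$ and yields $T^*Tu = T^*z = y$, so $\mathcal{R}(T^*T) = \mathcal{R}(T^*)$. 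The symmetric argument with $T$ and $T^*$ interchanged handles $(8)\Leftrightarrow(1)$. Finally, $(4)\Leftrightarrow(6)$ reduces to the classical identity $\gamma(T) = \gamma(T^*)$, valid for every closed densely defined operator between Hilbert spaces.

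The main technical obstacle is the domain bookkeeping in the proof that $\mathcal{R}(T^*T) = \mathcal{R}(T^*)$: one must verify that the vector $u \in \mathcal{C}(T)$ with $Tu = z - z_2$ actually lies in $\mathcal{D}(T^*T)$, which requires $Tu \in \mathcal{D}(T^*)$. This holds because $z \in \mathcal{D}(T^*)$ and $z_2 \in \mathcal{N}(T^*) \subseteq \mathcal{D}(T^*)$, but the argument depends essentially on the preliminary fact that $\mathcal{R}(T)$ is closed (so that $z_2$ really is the orthogonal projection onto $\mathcal{R}(T)^{\perp}$ rather than onto $\overline{\mathcal{R}(T)}^{\perp}$). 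Once this step is in place, the remaining reductions are routine Hilbert space manipulations.
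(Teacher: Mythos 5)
The paper offers no proof of this proposition at all---it is recalled verbatim from \cite{bi}---so your argument has to stand on its own, and most of it does. The core cluster $(1)\Leftrightarrow(3)\Leftrightarrow(4)\Leftrightarrow(5)$ is handled correctly: the decomposition $x=x_1+x_2$ with $x_2=P_{\mathcal{N}(T)}x\in\mathcal{N}(T)\subseteq\mathcal{D}(T)$ is legitimate, the graph of $T_0$ is $\mathcal{G}(T)\cap\bigl(\mathcal{N}(T)^{\perp}\times\mathcal{H}_2\bigr)$ and hence closed, and $T^{\dagger}$ does restrict to $T_0^{-1}$ on $\mathcal{R}(T)$ by property (2) of the definition of the Moore--Penrose inverse. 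The closed range theorem gives $(1)\Leftrightarrow(2)$, and your two-sided treatment of $(7)$---in particular the domain bookkeeping showing $Tu=z-z_2\in\mathcal{D}(T^*)$, so that $\mathcal{R}(T^*T)=\mathcal{R}(T^*)$ once $\mathcal{R}(T)$ is closed---is the right argument; $(8)$ then follows by symmetry since $T^{**}=T$. (You should state explicitly that $T$ is assumed densely defined; otherwise $T^*$ does not exist and items $(2)$, $(6)$, $(7)$, $(8)$ are meaningless.)

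The genuine gap is item $(6)$. You write that $(4)\Leftrightarrow(6)$ ``reduces to the classical identity $\gamma(T)=\gamma(T^*)$, valid for every closed densely defined operator between Hilbert spaces.'' But if that identity holds for \emph{every} such operator, then $(6)$ is unconditionally true and cannot imply $(4)$: any closed densely defined $T$ with non-closed range satisfies $(6)$ but not $(4)$. And the identity \emph{is} unconditional in the Hilbert space setting: by polar decomposition $T=U|T|$ one has $\gamma(T)=\gamma(|T|)$ and $\gamma(T^*)=\gamma(|T^*|)$, and $|T^*|=U|T|U^*$ with $U$ an isometry of $\mathcal{N}(T)^{\perp}$ onto $\mathcal{N}(T^*)^{\perp}$, so $\gamma(T)=\gamma(T^*)$ whether or not the ranges are closed (when they are not, both sides equal $0$). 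Your sentence therefore proves $(4)\Rightarrow(6)$ while simultaneously destroying $(6)\Rightarrow(4)$. The defect is arguably inherited from the statement being quoted---item $(6)$ as written is degenerate and should presumably read $\gamma(T)=\gamma(T^*)>0$, or be demoted to the one-way implication that accompanies the closed range theorem---but a proof claiming the full eight-fold equivalence must either supply an argument for $(6)\Rightarrow(4)$ (impossible under your reading of $(6)$) or explicitly flag and repair the formulation. As written, your proof is incomplete at exactly this point.
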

Since $T^*=M_{\bar{u}}EM_{\bar{w}}$, then by Theorem \ref{t2.6} and Proposition \ref{p2.8} we have the following corollary.
\begin{cor}
If the WCT operator $T=M_wEM_u$ is densely define on Hilbert space $L^2(\mu)$, then $\mathcal{R}(T)$ is closed if and only if $\mathcal{R}(T^*)$ is closed if and only if $E(|u|^2)E(|w|^2)\geq r$, $\mu$, a.e., for some $r>0$.
\end{cor}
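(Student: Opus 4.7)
The plan is to assemble this corollary directly from the two ingredients already prepared in the text: Theorem \ref{t2.6}, which translates closedness of $\mathcal{R}(T)$ into the analytic condition $E(|u|^2)E(|w|^2)\geq r$ a.e.\ for some $r>0$, and Proposition \ref{p2.8}, whose equivalence (1)$\Leftrightarrow$(2) states that for any closed densely defined operator between Hilbert spaces, $\mathcal{R}(T)$ is closed if and only if $\mathcal{R}(T^*)$ is closed. So the task is really just one of chaining equivalences; there is no new analysis to do.

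First I would recall the standing fact established earlier in Section 2: a densely defined WCT operator is automatically closed, with adjoint $T^*=M_{\bar u}EM_{\bar w}$. In particular $T$ satisfies the hypotheses of Proposition \ref{p2.8}, so (1)$\Leftrightarrow$(2) gives the first half of the corollary, namely $\mathcal{R}(T)$ closed $\iff$ $\mathcal{R}(T^*)$ closed. Then I would apply Theorem \ref{t2.6} to $T$ (under the blanket hypothesis that the underlying measure space satisfies $(\bigstar)$, as in the previous results), obtaining $\mathcal{R}(T)$ closed $\iff$ $E(|u|^2)E(|w|^2)\geq r$ a.e.\ for some $r>0$. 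Concatenating the two gives the triple equivalence stated.

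As a built-in consistency check I would note that one can equally well apply Theorem \ref{t2.6} to $T^*=M_{\bar u}EM_{\bar w}$, which is itself a densely defined WCT operator with weights $\bar u,\bar w\in \mathcal{D}(E)$. This yields $\mathcal{R}(T^*)$ closed $\iff$ $E(|\bar w|^2)E(|\bar u|^2)\geq r$ a.e., which is the same condition because $|\bar u|=|u|$ and $|\bar w|=|w|$. So the two routes agree, and in fact the identity of the two conditions can be read as an alternative, self-contained derivation of the equivalence $\mathcal{R}(T)$ closed $\iff$ $\mathcal{R}(T^*)$ closed in this WCT setting, without appealing to Proposition \ref{p2.8}.

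There is no real obstacle: the heavy lifting (the spectral description via $\operatorname{ess.range}(E(uw))$ in Remark \ref{r2.5}, the reduction $T^*T=M_{\bar u E(|w|^2)}EM_u$, and the appeal to the Kulkarni–Nair type spectral criterion) has all been absorbed into Theorem \ref{t2.6}. The only point worth flagging in the write-up is to make sure the $(\bigstar)$ hypothesis required by Theorem \ref{t2.6} is carried through the statement of the corollary, since it is what allows us to use the spectral identification in the first place.
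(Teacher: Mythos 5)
Your proposal is correct and follows essentially the same route as the paper, which derives the corollary by combining Theorem \ref{t2.6} with the equivalence (1)$\Leftrightarrow$(2) of Proposition \ref{p2.8}, using the fact that a densely defined WCT operator is closed with $T^*=M_{\bar u}EM_{\bar w}$. Your remark that the $(\bigstar)$ hypothesis from Theorem \ref{t2.6} should be carried into the statement is a fair observation, since the paper's corollary omits it even though its proof relies on that theorem.
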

Let $T\in \mathcal{C}(\mathcal{H}_1, \mathcal{H}_2)$. By definition of reduced minimum modulus of $T$, for each $x\in \mathcal{D}(T)$ with $\|x\|=1$, we can write
$\gamma(T)^2\leq \|Tx\|^2=\langle T^*Tx,x\rangle$ and so $\|T\|^2=\|T^*T\|\geq \gamma(T)^2$. Then we have $\|T\|\geq \gamma(T)$. If $T=M_wEM_u$ and $\mathcal{R}(T)$ is closed, then by Proposition \ref{p2.8} we get that $T^{\dagger}$ is bounded. Thus $$\|\frac{1}{(E(|u|^2))^{\frac{1}{2}}{(E(|w|^2))^{\frac{1}{2}}}}\|_{\infty}=\|T^{\dagger}\|\geq \gamma(T^{\dagger}).$$
By Proposition 2.12 of \cite{kn} we have
 $$\gamma(T)=\frac{1}{\|T^{\dagger}\|}=\|(E(|u|^2))^{\frac{1}{2}}{(E(|w|^2))^{\frac{1}{2}}}\|=\|T\|$$

Now by the above results, we have a necessary and sufficient condition for boundedness of the Moore-Penrose inverse of an unbounded WCT operator with closed range.
\begin{cor}
Under the assumptions of Theorem \ref{t2.6}, for $T=M_wEM_u$, we get that $T^{\dagger}$ is bounded if and only if $E(|u|^2)E(|w|^2)\geq r$, $\mu$, a.e., for some $r>0$. And in this case $\|T^{\dagger}\|=\|\frac{1}{(E(|u|^2))^{\frac{1}{2}}(E(|w|^2))^{\frac{1}{2}}}\|_{\infty}$.
\end{cor}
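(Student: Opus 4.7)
The statement is now a straightforward consolidation of earlier results, so the plan is to assemble it rather than to introduce a new argument. For the biconditional part I would simply chain Proposition \ref{p2.8} with Theorem \ref{t2.6}. Since $T=M_wEM_u$ is a closed densely defined operator (as observed in the paragraph after Lemma \ref{l2.3}), the equivalence $(1)\Leftrightarrow(5)$ of Proposition \ref{p2.8} states that $T^{\dagger}$ is bounded if and only if $\mathcal{R}(T)$ is closed, while Theorem \ref{t2.6} identifies closed range with the positivity condition $E(|u|^2)E(|w|^2)\geq r$ $\mu$-a.e. for some $r>0$. Nothing more is needed for this half.

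For the norm identity, my plan is to exploit the explicit Moore--Penrose formula of Theorem \ref{t2.9}. Writing $h=\chi_{S}/(E(|u|^2)E(|w|^2))$, that theorem gives $T^{\dagger}=M_{h\bar u}EM_{\bar w}$, so $T^{\dagger}$ is itself a WCT operator, now with weights $h\bar u$ and $\bar w$. The function $h$ is $\mathcal{A}$-measurable (being built from $E(|u|^2)$ and $E(|w|^2)$ and their supports) and, under the positivity condition, essentially bounded, so $T^{\dagger}$ is a bounded WCT operator. I would then invoke the standard norm formula for bounded WCT operators, $\|M_aEM_b\|^2=\|E(|a|^2)E(|b|^2)\|_{\infty}$, and pull the $\mathcal{A}$-measurable factor $h$ out of the conditional expectation:
$$E(|h\bar u|^2)\,E(|\bar w|^2)\;=\;h^{2}E(|u|^2)E(|w|^2)\;=\;\frac{\chi_{S}}{E(|u|^2)E(|w|^2)}.$$
Taking essential supremum and a square root yields $\|T^{\dagger}\|=\|1/((E(|u|^2))^{1/2}(E(|w|^2))^{1/2})\|_{\infty}$, with the usual convention that the denominator is interpreted as $+\infty$ off $S$ so its reciprocal vanishes there.

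I do not anticipate any substantive obstacle. The only point requiring a little care is the genuine application of the WCT norm formula to $T^{\dagger}$: the upper bound is an immediate application of the conditional H\"older inequality recalled at the beginning of Section~2, while the matching lower bound is obtained by testing against indicator functions $\chi_{B}$ for $\mathcal{A}$-measurable sets $B\subseteq S$ on which $E(|h\bar u|^2)E(|\bar w|^2)$ is close to its essential supremum (using the $(\bigstar)$ and $\sigma$-finiteness assumptions to produce such $B$ of finite measure). Both bounds are already implicit in the computation displayed in the paragraph preceding the corollary, so the proof reduces to recording these two references.
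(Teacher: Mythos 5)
Your proposal is correct and takes essentially the same route as the paper: it too combines Proposition \ref{p2.8} with Theorem \ref{t2.6} for the equivalence, observes via Theorem \ref{t2.9} that $T^{\dagger}$ is itself a WCT operator, and invokes the boundedness and norm criterion for bounded WCT operators (which the paper simply cites to \cite{ej1}) to get $\|T^{\dagger}\|=\|\frac{1}{(E(|u|^2))^{1/2}(E(|w|^2))^{1/2}}\|_{\infty}$. The only difference is that you write out explicitly the computation $E(|h\bar u|^2)E(|\bar w|^2)=\chi_S/(E(|u|^2)E(|w|^2))$ and the upper/lower bound argument that the paper delegates to the citation.
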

\begin{proof}
By the Proposition \ref{p2.8} we now that the $\mathcal{R}(T)$ is closed if and only if $T^{\dagger}$ is bounded. Since $$T^{\dagger}=M_{\frac{\chi_S}{E(|u|^2)E(|w|^2)}}M_{\bar{u}}EM_{\bar{w}}, \text{in which} \ \ S=S(E(|u|^2))\cap S(E(|w|^2)),$$
is a WCT operator, then by \cite{ej1} we get that $T^{\dagger}$ is bounded if and only if $$\frac{1}{(E(|u|^2))^{\frac{1}{2}}(E(|w|^2))^{\frac{1}{2}}}\in L^{\infty}(\mathcal{A}),$$
 and in this case $\|T^{\dagger}\|=\|\frac{1}{(E(|u|^2))^{\frac{1}{2}}(E(|w|^2))^{\frac{1}{2}}}\|_{\infty}$.
\end{proof}

Now we recall some properties of closed densely defined operators between two Hilbert spaces.
\begin{prop}
Let $\mathcal{H}_1$ and $\mathcal{H}_2$ be Hilbert spaces and $T:\mathcal{H}_1\rightarrow \mathcal{H}_2$ be a closed densely defined operator. Then the followings hold:
\begin{enumerate}
\item $\mathcal{N}(T)=\mathcal{R}(T^*)^{\perp}$.
\item $\mathcal{N}(T^*)=\mathcal{R}(T)^{\perp}$.
\item $\mathcal{N}(T^*T)=\mathcal{N}(T)$.
\item $\overline{\mathcal{R}(T^*T)}=\overline{\mathcal{R}(T^*)}$.
\end{enumerate}

\end{prop}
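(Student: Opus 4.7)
The four assertions are standard facts in the theory of closed unbounded operators, and the plan is to deduce them in order, each building on the previous ones, using only the defining property of the adjoint together with the closedness identity $T^{**}=T$.

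For part (1), the key is the characterization $x\in\mathcal{R}(T^*)^{\perp}$ iff $\langle x,T^*y\rangle=0$ for every $y\in \mathcal{D}(T^*)$. By the very definition of the adjoint applied to $T^*$, this is equivalent to $x\in \mathcal{D}((T^*)^*)=\mathcal{D}(T^{**})=\mathcal{D}(T)$ together with $T^{**}x=Tx=0$, which is just $x\in\mathcal{N}(T)$. Here I would remark that density of $\mathcal{D}(T^*)$ (which follows from the fact that $T$ is closed and densely defined, a fact already invoked in the excerpt for WCT operators) is exactly what ensures that the vector implementing the pairing, namely $0$, is uniquely determined. Part (2) then follows by substituting $T^*$ for $T$ in (1), using again $T^{**}=T$ to convert $(T^*)^*=T$ on the right-hand side.

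For part (3), the inclusion $\mathcal{N}(T)\subseteq \mathcal{N}(T^*T)$ is immediate since $Tx=0$ makes $Tx\in \mathcal{D}(T^*)$ trivially with $T^*Tx=0$. The reverse inclusion follows from the inner-product identity: for $x\in\mathcal{D}(T^*T)$ with $T^*Tx=0$, one computes $\|Tx\|^2=\langle Tx,Tx\rangle=\langle T^*Tx,x\rangle=0$, forcing $Tx=0$.

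Part (4) is the one whose proof needs the most external input, and I expect it to be the main obstacle. The cleanest route invokes von Neumann's theorem that for a closed densely defined operator $T$ the composition $T^*T$ is self-adjoint on its natural domain $\mathcal{D}(T^*T)=\{x\in\mathcal{D}(T):Tx\in \mathcal{D}(T^*)\}$, which is moreover dense. Granted that, apply (2) to the self-adjoint operator $T^*T$ to obtain $\overline{\mathcal{R}(T^*T)}=\mathcal{N}((T^*T)^*)^{\perp}=\mathcal{N}(T^*T)^{\perp}$. By (3) the right-hand side equals $\mathcal{N}(T)^{\perp}$, and by (1) we have $\mathcal{N}(T)^{\perp}=\mathcal{R}(T^*)^{\perp\perp}=\overline{\mathcal{R}(T^*)}$, yielding the required equality. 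The only nontrivial ingredient beyond parts (1)--(3) is the self-adjointness of $T^*T$; if one wishes to avoid citing it, one can instead verify the two inclusions separately, using $\mathcal{R}(T^*T)\subseteq \mathcal{R}(T^*)$ for the easy direction and, for the harder one, show that any vector orthogonal to $\mathcal{R}(T^*T)$ lies in $\mathcal{N}(T^*T)=\mathcal{N}(T)$, hence is orthogonal to $\mathcal{R}(T^*)$ by (1).
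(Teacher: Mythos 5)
The paper states this proposition without proof, simply recalling it as a collection of standard facts about closed densely defined operators, so there is no argument of the authors' to compare yours against; your write-up supplies a correct, self-contained justification. Parts (1)--(3) are handled exactly as one should: the identification of $\mathcal{R}(T^*)^{\perp}$ with $\mathcal{N}(T^{**})=\mathcal{N}(T)$ via the defining property of the adjoint and the closedness identity $T^{**}=T$, the substitution $T\mapsto T^*$ for (2), and the identity $\|Tx\|^2=\langle T^*Tx,x\rangle$ for (3) are all sound. For (4), your main route through von Neumann's theorem (self-adjointness and dense definedness of $T^*T$) combined with (1)--(3) is correct and is the standard argument; you are right that this external input is unavoidable in some form. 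One small caveat: the ``alternative'' you sketch at the end of (4) --- showing that a vector orthogonal to $\mathcal{R}(T^*T)$ lies in $\mathcal{N}(T^*T)$ --- does not actually avoid von Neumann, since the step $\mathcal{R}(T^*T)^{\perp}=\mathcal{N}((T^*T)^*)\subseteq\mathcal{N}(T^*T)$ again requires $(T^*T)^*=T^*T$ rather than mere symmetry of $T^*T$; so that variant is a repackaging of the same ingredient, not a way around it.
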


Let $T$ be a linear operator on the Hilbert space $\mathcal{H}$ and $\mathcal{G}(T)=\{(x,Tx): x\in \mathcal{D}(T)\}$ be the graph of $T$. Let $P=(P_{ij})$ be the projection from $\mathcal{H}\oplus \mathcal{H}$ onto $\overline{\mathcal{G}(T)}$. The $2\times 2$ operator $P$ in $\mathcal{H}$ is called the characteristic matrix of $T$. Since $P$ is a projection, then $P^2=P^*=P$ and so
$$P^*_{ij}=P_{ij}, \ \ \ \ \ \ \ \ \ \Sigma_{k}P_{ik}P_{kj}=P_{ij}.$$
Therefore $P_{11}$ and $P_{22}$ are selfadjoint. Moreover, $TP_{11}=P_{21}$ and $TP_{12}=P_{22}$. Now we recall the next Proposition for characteristic matrix for regular operators.
\begin{prop}\cite{ft}\label{p2.13}
Let $T$ be a regular operator with characteristic matrix $P=(P_{ij})$. Then we have the followings:
\begin{itemize}
  \item The entries $P_{ij}$ are given by
  $$P_{11}=(I+T^*T)^{-1} \ \ \ \ \ \ \ \ P_{12}=T^*(I+TT^*)^{-1},$$
  $$P_{21}=T(I+T^*T)^{-1} \ \ \ \ \ \ \ \ P_{22}=TT^*(I+TT^*)^{-1}.$$
  \item $I-P_{22}=(I+TT^*)^{-1}$.
\end{itemize}

\end{prop}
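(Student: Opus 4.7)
The plan is to exploit the orthogonal decomposition $\mathcal{H}\oplus\mathcal{H}=\mathcal{G}(T)\oplus\mathcal{G}(T)^{\perp}$ and compute $P$ by solving for its action on the two ``axes'' $(x,0)$ and $(0,y)$. The key input is the classical fact (von Neumann) that for a regular operator $T$, the operators $I+T^*T$ and $I+TT^*$ are positive selfadjoint with everywhere-defined bounded inverse; I would quote this at the outset so that the expressions $(I+T^*T)^{-1}$ and $(I+TT^*)^{-1}$ make sense as bounded operators on all of $\mathcal{H}$.

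The first real step is to identify the orthogonal complement of $\mathcal{G}(T)$. A pair $(u,v)\in \mathcal{H}\oplus\mathcal{H}$ is orthogonal to every $(x,Tx)$ with $x\in\mathcal{D}(T)$ iff $\langle v,Tx\rangle=-\langle u,x\rangle$ for all such $x$, which by definition of the adjoint is exactly the statement that $v\in\mathcal{D}(T^*)$ and $T^*v=-u$. Hence
\[
\mathcal{G}(T)^{\perp}=\{(-T^*v,v):v\in\mathcal{D}(T^*)\}.
\]
Because $T$ is closed, $\mathcal{G}(T)$ itself is closed, so every $(x,y)\in\mathcal{H}\oplus\mathcal{H}$ admits a unique decomposition $(x,y)=(a,Ta)+(-T^*b,b)$ with $a\in\mathcal{D}(T)$, $b\in\mathcal{D}(T^*)$, and $P(x,y)=(a,Ta)$.

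Next I would read off the matrix entries by specializing. Plugging in $(x,0)$ forces $b=-Ta$ and then $x=a+T^*Ta=(I+T^*T)a$, so $a=(I+T^*T)^{-1}x$; thus $P(x,0)=((I+T^*T)^{-1}x,\;T(I+T^*T)^{-1}x)$, which yields the claimed formulas for $P_{11}$ and $P_{21}$. Symmetrically, plugging in $(0,y)$ gives $a=T^*b$ and $y=(I+TT^*)b$, hence $b=(I+TT^*)^{-1}y$ and $P(0,y)=(T^*(I+TT^*)^{-1}y,\;TT^*(I+TT^*)^{-1}y)$, giving $P_{12}$ and $P_{22}$. Here one should check that the resulting $a$ indeed lies in $\mathcal{D}(T)$ and $b$ in $\mathcal{D}(T^*)$, which follows from the mapping properties of $(I+T^*T)^{-1}$ and $(I+TT^*)^{-1}$ (their ranges are precisely $\mathcal{D}(T^*T)\subseteq\mathcal{D}(T)$ and $\mathcal{D}(TT^*)\subseteq\mathcal{D}(T^*)$ respectively).

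Finally, the second bullet is a one-line algebraic manipulation on $\mathcal{H}$:
\[
I-P_{22}=I-TT^*(I+TT^*)^{-1}=\bigl((I+TT^*)-TT^*\bigr)(I+TT^*)^{-1}=(I+TT^*)^{-1}.
\]
The only delicate point in the whole argument is justifying that $(I+T^*T)^{-1}$ and $(I+TT^*)^{-1}$ are bounded and everywhere defined, so that the formal calculations above take place in $\mathbf{B}(\mathcal{H})$; once that is in hand, the graph decomposition does all the work and the remainder is bookkeeping.
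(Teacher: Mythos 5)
Your proposal is correct and follows essentially the same route the paper itself uses: the orthogonal decomposition $\mathcal{H}\oplus\mathcal{H}=\mathcal{G}(T)\oplus V\mathcal{G}(T^*)$ (your $\mathcal{G}(T)^{\perp}=\{(-T^*v,v):v\in\mathcal{D}(T^*)\}$ is exactly the paper's $V\mathcal{G}_{T^*}$), specialized to $(x,0)$ and $(0,y)$ to read off the columns of $P$, with von Neumann's theorem guaranteeing that $(I+T^*T)^{-1}$ and $(I+TT^*)^{-1}$ are bounded and everywhere defined. The paper only writes out the $(h,0)$ case explicitly and cites \cite{ft} for the full statement; your treatment of the $(0,y)$ column and the identity $I-P_{22}=(I+TT^*)^{-1}$ completes the same argument correctly.
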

Let $T$ be a closed densely defined operator on the Hilbert space $\mathcal{H}$ and $\lambda>0$. Then the operator $\lambda I+ T^*T$ is strictly positive and bounded below. Indeed, for $\lambda>0$ and $f\in \mathcal{D}(T^*T)$, we have

$$\langle (\lambda I+ T^*T)f,f\rangle=\langle \lambda^{\frac{1}{2}}f,\lambda^{\frac{1}{2}}f\rangle+\langle Tf,Tf\rangle\geq \langle \lambda^{\frac{1}{2}}f,\lambda^{\frac{1}{2}}f\rangle=\lambda\|f\|^2>0.$$
This implies that the operator $\lambda I+T^*T$ is strictly positive, bounded below, one to one and so invertible.\\

Here we recall some basic calculations in functional analysis to obtain some properties of the positive operator $P_{\lambda}=(\lambda I+T^*T)^{-1}$. Let

$$U,V:\mathcal{H}\times \mathcal{H}\rightarrow \mathcal{H}\times \mathcal{H}, \text{such that}, \ \ \ \ V(f,g)=(g,-f), \ \ \ U(f,g)=(g,f).$$

Then we have
$$\mathcal{G}_{T}\oplus V\mathcal{G}_{T^*}=\mathcal{G}_{T^*}\oplus V\mathcal{G}_{T}=\mathcal{H}\times \mathcal{H}.$$

Let $h\in \mathcal{H}$. Then $(h,0)\in \mathcal{H}\times \mathcal{H}$ and so $(h,0)=(x,Tx)+(T^*y,-y)$ for $x\in \mathcal{D}(T)$ and $y\in \mathcal{D}(T^*)$. Therefore $h=x+T^*y$ and $y=Tx$. So we have $h=x+T^*Tx=(I+T^*T)x$ and consequently $x=(I+T^*T)^{-1}h=P_{11}h$ and $y=Tx=TP_{11}h$. This means that the operators $P_{11}$ and $TP_{11}$ are every where defined. Therefore, we get that $P_{\lambda}$ is also every where defined.\\

In addition for $x\in \mathcal{H}$,
\begin{align*}
\langle P_{\lambda}x,x\rangle&=\langle P_{\lambda}x,(\lambda I+T^*T)P_{\lambda}x\rangle\\
&=\lambda\langle P_{\lambda}x,P_{\lambda}x\rangle+\langle P_{\lambda}x,T^*TP_{\lambda}x\rangle\\
&=\lambda\langle P_{\lambda}x,P_{\lambda}x\rangle+\langle TP_{\lambda}x,TP_{\lambda}x\rangle\\
&\geq0.
\end{align*}
Then $P_{\lambda}$ is positive. Moreover, for $x,y\in \mathcal{H}$,
\begin{align*}
\langle P_{\lambda} x,y\rangle&=\langle P_{\lambda}x,P_{\lambda}^{-1}P_{\lambda}y\rangle\\
&=\langle P_{\lambda}x,(\lambda I+T^*T)T_{\lambda}y\rangle\\
&=\lambda\langle P_{\lambda}x,P_{\lambda}y\rangle+\langle P_{\lambda}x,T^*Ty\rangle\\
&=\lambda\langle P_{\lambda}x,P_{\lambda}y\rangle+\langle T^*TP_{\lambda}x,y\rangle\\
&=\langle (\lambda I+T^*T)T_{\lambda}x,P_{\lambda}y\rangle\\
&=\langle x,P_{\lambda}y\rangle.
\end{align*}
This implies that $P_{\lambda}$ is symmetric. In the next theorem apply the above observations to WCT operators.

\begin{thm}\label{t2.14}
Let $T=M_wEM_u$ be densely defined on $L^2(\mu)$. Then the characteristic matrix of $T$ is:

\begin{itemize}
\item $P_{11}=I-M_{h_1\bar{u}}EM_u$,\\
\item $P_{21}=M_hM_wEM_u$,\\
\item $P_{12}=M_hM_{\bar{u}}EM_{\bar{w}}$,\\
\item $P_{22}=M_{h_2}M_{w}EM_{\bar{w}}$,\\
\end{itemize}

in which

$$h_1=\frac{E(|w|^2)}{1+E(|u|^2)E(|w|^2)},  \ \ \ \ h_2=\frac{E(|u|^2)}{1+E(|u|^2)E(|w|^2)}, \ \ \ \ h=\frac{1}{1+E(|u|^2)E(|w|^2)}.$$

\end{thm}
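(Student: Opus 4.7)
The plan is to apply Proposition \ref{p2.13}, which reduces the task to computing the four resolvent expressions $(I+T^{*}T)^{-1}$, $T^{*}(I+TT^{*})^{-1}$, $T(I+T^{*}T)^{-1}$, and $TT^{*}(I+TT^{*})^{-1}$ for a WCT operator. Since $T=M_wEM_u$ is densely defined we have $T^{*}=M_{\bar u}EM_{\bar w}$, and by the pull-out property of conditional expectation (that $\mathcal{A}$-measurable factors commute with $E$) a direct computation gives
\begin{equation*}
T^{*}Tf=\bar u\,E(|w|^{2})\,E(uf),\qquad TT^{*}g=w\,E(|u|^{2})\,E(\bar w g),
\end{equation*}
so that $T^{*}T=M_{\bar u E(|w|^{2})}EM_u$ and $TT^{*}=M_{w E(|u|^{2})}EM_{\bar w}$. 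In particular both are of the same WCT type as $T$, with $\mathcal{A}$-measurable scalar factors $E(|u|^{2})$ and $E(|w|^{2})$ absorbed into the weights.

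To identify $P_{11}=(I+T^{*}T)^{-1}$, I would make the ansatz $P_{11}=I-M_{\alpha\bar u}EM_u$ for some $\mathcal{A}$-measurable scalar $\alpha$ (to be determined), motivated by the fact that $T^{*}T$ is a rank-one-like perturbation of the identity built from the kernel $\bar u\otimes u$. Computing $(I+T^{*}T)(I-M_{\alpha\bar u}EM_u)f$ using the pull-out property, the cross terms collapse to $\bar u E(uf)$ times the scalar expression $-\alpha+E(|w|^{2})-\alpha E(|u|^{2})E(|w|^{2})$, and requiring this to vanish forces
\begin{equation*}
\alpha\bigl(1+E(|u|^{2})E(|w|^{2})\bigr)=E(|w|^{2}),
\end{equation*}
i.e.\ $\alpha=h_{1}$. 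This yields the stated $P_{11}$. The formula $P_{21}=TP_{11}$ then follows by one more application of the pull-out identity, after noting the scalar simplification $1-h_{1}E(|u|^{2})=h$, which produces $P_{21}=M_hM_wEM_u$.

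The remaining two entries come by the same mechanism applied to $TT^{*}$ instead of $T^{*}T$: the symmetric ansatz gives $(I+TT^{*})^{-1}=I-M_{h_{2}w}EM_{\bar w}$, where now the scalar equation is $h_{2}(1+E(|u|^{2})E(|w|^{2}))=E(|u|^{2})$. Then $P_{12}=T^{*}(I+TT^{*})^{-1}$ is computed by a direct substitution; using $1-h_{2}E(|w|^{2})=h$ one obtains $P_{12}=M_hM_{\bar u}EM_{\bar w}$. Finally, $P_{22}=I-(I+TT^{*})^{-1}=M_{h_{2}w}EM_{\bar w}=M_{h_{2}}M_wEM_{\bar w}$ from the second identity in Proposition \ref{p2.13}.

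The computations themselves are routine conditional-expectation manipulations; the main technical point — and the step where one must be careful — is that the ansatz really inverts $I+T^{*}T$ on the whole of $L^{2}(\mu)$ rather than only on $\mathcal{D}(T^{*}T)$. This is justified because Proposition \ref{p2.13} (together with the fact, noted before that proposition, that $P_{11}$ and $TP_{11}$ are everywhere defined) guarantees that the inverse is everywhere defined, and our explicit WCT formula manifestly makes sense for every $f\in L^{2}(\mu)$ since $h_{1},h_{2},h\in L^{\infty}(\mathcal{A})$ by construction.
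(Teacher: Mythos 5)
Your proof is correct and follows essentially the same route as the paper: both apply Proposition \ref{p2.13} after identifying $T^{*}T=M_{\bar u E(|w|^{2})}EM_u$ and $TT^{*}=M_{wE(|u|^{2})}EM_{\bar w}$, and then reduce everything to inverting $I+T^{*}T$ and $I+TT^{*}$. The only difference is that the paper imports the resolvent formula $(\lambda I-M_wEM_u)^{-1}=\frac{1}{\lambda}I+M_{\frac{w}{\lambda(\lambda-E(uw))}}EM_u$ from \cite{y1}, whereas you rederive the special case needed here via the ansatz $I-M_{\alpha\bar u}EM_u$; this makes your argument slightly more self-contained but is not a genuinely different method.
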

\begin{proof}

In \cite{y1} we have found that for densely defined WCT operator $T=M_wEM_u$ on $L^2(\mu)$ and $\lambda \notin \text{ess.range}(E(uw))$,

$$(\lambda I-M_wEM_u)^{-1}=\frac{1}{\lambda} I+M_{\frac{w}{\lambda(\lambda-E(uw))}}EM_u.$$

Also, we have proven that if $E(| w|^2)E(| u|^2)<\infty$, a.e., $\mu$, then $T=M_wEM_u$ and $T^*$ are closed and densely defined on the Hilbert space $L^2(\mu)$, and also $T^{**}=T$. This means that $T$ is regular.\\

 By the fact that $T^*T=M_{E(|w|^2)\bar{u}}EM_{u}$ and $TT^*=M_{E(|u|^2)w}EM_{\bar{w}}$ are positive operators and the above formula from \cite{y1}, and also by Proposition \ref{p2.13} we have the characteristic matrix of WCT operator $T=M_wEM_u$ as follows:

\begin{align*}
P_{11}&=(I+M_{\bar{u}E(|w|^2)}E_u)^{-1}=I-M_{h_1\bar{u}}EM_u,\\
P_{21}&=M_{w}EM_{u}(I+M_{\bar{u}E(|w|^2)}EM_u)^{-1}=M_hM_wEM_u,\\
P_{12}&=M_{\bar{u}}EM_{\bar{w}}(I+M_{wE(|u|^2)}EM_{\bar{w}})^{-1}=M_hM_{\bar{u}}EM_{\bar{w}},\\
P_{22}&=M_{wE(|u|^2)}EM_{\bar{w}}(I+M_{wE(|u|^2)}EM_{\bar{w}})^{-1}=M_{h_2}M_{w}EM_{\bar{w}},\\
\end{align*}

in which

$$h_1=\frac{E(|w|^2)}{1+E(|u|^2)E(|w|^2)},  \ \ \ \ h_2=\frac{E(|u|^2)}{1+E(|u|^2)E(|w|^2)}, \ \ \ \ h=\frac{1}{1+E(|u|^2)E(|w|^2)}.$$

\end{proof}

In the sequel we get some similar results, indeed for every $\lambda>0$ we have

  $$P_{\lambda}=(\lambda I+M_{\bar{u}E(| w|^2)}EM_u)^{-1}=\frac{1}{\lambda}I-M_{\frac{E(| w|^2)\bar{u}}{\lambda(E(| w|^2)E(| u|^2)+\lambda)}}EM_u$$

  $$Q_{\lambda}=(\lambda I+M_{wE(| u|^2)}EM_{\bar{w}})^{-1}=\frac{1}{\lambda}I-M_{\frac{E(| u|^2)w}{\lambda(E(| w|^2)E(| u|^2)+\lambda)}}EM_{\bar{w}}$$

$$TP_{\lambda}=\frac{1}{\lambda}M_wEM_u-M_{\frac{E(| w|^2)E(|u|^2)w}{\lambda(E(|w|^2)E(| u|^2))+\lambda}}EM_u=M_{t_{\lambda}}T$$
i.e.,
$$TP_{\lambda}=Q_{\lambda}T=M_{t_{\lambda}}T=TM_{t_{\lambda}},$$

in which $t_{\lambda}=\frac{1}{E(|u|^2)E(|w|^2)+\lambda}$.\\

By Theorem \ref{t2.9} we have
$$(I-P_{11})^{\dagger}=M_{\chi_{S}\frac{\bar{u}}{(E(|u|^2))^2}}EM_u$$
and
$$(I-Q_1)^{\dagger}=M_{\frac{\chi_{S}w}{E(|w|^2)}}EM_{\bar{w}}.$$

As we compute in \cite{ej1},
$$|T|=M_{g\bar{u}}EM_u, \ \ \ \ \text{in which} \ \ \ \ g=(\frac{E(|w|^2)}{E(|u|^2)})^{\frac{1}{2}}\chi_S,$$
and so by Remark \ref{r2.5} we have
$$\sigma(|T|)\backslash \{0\}=\text{ess.range}((E(|w|^2))^{\frac{1}{2}}(E(|u|^2))^{\frac{1}{2}})\backslash \{0\}.$$
By the above observations we get the next theorem.
\begin{thm}
 Let $T=M_wEM_u:L^2(\mu)\rightarrow L^2(\mu)$ and $(E(|u|^2))^{\frac{1}{2}}(E(|w|^2))^{\frac{1}{2}}<\infty, \ \ \ \text{a.e.}$. For every $\lambda>0$, the operators $P_{\lambda}$ and $TP_{\lambda}$ are every where defined, bounded and contraction. Moreover, $P_{\lambda}$ and $Q_{\lambda}$ are positive and symmetric.
\end{thm}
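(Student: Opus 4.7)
The plan is to leverage the explicit formulas for $P_\lambda$, $Q_\lambda$, and $TP_\lambda$ already derived in the paragraphs immediately preceding the theorem, combined with standard Hilbert-space facts about $(\lambda I + T^*T)^{-1}$. First I would dispose of \emph{everywhere-definedness}: since the hypothesis $(E(|u|^2))^{1/2}(E(|w|^2))^{1/2} < \infty$ a.e.\ implies $T = M_w E M_u$ is densely defined and closed (as established earlier in this section), $T^*T$ is self-adjoint and non-negative, so $\lambda I + T^*T$ is bounded below by $\lambda > 0$ and is a bijection from $\mathcal{D}(T^*T)$ onto $L^2(\mu)$. Thus $P_\lambda = (\lambda I + T^*T)^{-1}$ is everywhere defined, and since its range lies inside $\mathcal{D}(T^*T) \subseteq \mathcal{D}(T)$, the composition $TP_\lambda$ is also everywhere defined. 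The same reasoning applied to $TT^*$ handles $Q_\lambda$.

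For \emph{boundedness}, I would exploit the explicit formula $TP_\lambda = M_{t_\lambda}T$ with $t_\lambda = (E(|u|^2)E(|w|^2)+\lambda)^{-1}$. Using the conditional Hölder inequality $|E(uf)|^2 \leq E(|u|^2)E(|f|^2)$ and pulling $\mathcal{A}$-measurable factors through $E$, one obtains
\begin{align*}
\|TP_\lambda f\|_2^2
&\leq \int \frac{E(|w|^2)\,E(|u|^2)}{(E(|u|^2)E(|w|^2)+\lambda)^2}\, E(|f|^2)\, d\mu \\
&\leq \frac{1}{4\lambda}\,\|f\|_2^2,
\end{align*}
via the scalar bound $s/(s+\lambda)^2 \leq 1/(4\lambda)$ for $s\geq 0$. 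For $P_\lambda$ itself, either apply the same estimate to its explicit formula, or invoke the general spectral bound $\|(\lambda I + T^*T)^{-1}\|\leq 1/\lambda$.

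For \emph{positivity and symmetry} of $P_\lambda$ (and $Q_\lambda$), the calculations are literally the ones already displayed a few lines above the theorem, written there for a generic $\lambda > 0$: the identities
\[
\langle P_\lambda x, x\rangle = \lambda \|P_\lambda x\|^2 + \|TP_\lambda x\|^2 \geq 0, \qquad \langle P_\lambda x, y\rangle = \langle x, P_\lambda y\rangle,
\]
and their $Q_\lambda$-analogues (with $TT^*$ and $M_{\bar w} f$ in the relevant places) transfer verbatim, so I would simply quote them rather than rewrite.

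The main obstacle is terminological rather than mathematical: the estimates above give $\|P_\lambda\|\leq 1/\lambda$ and $\|TP_\lambda\|\leq 1/(2\sqrt{\lambda})$, so these operators are genuine norm-one contractions only for $\lambda \geq 1$ (respectively $\lambda \geq 1/4$). In the write-up I would either restrict the "contraction" portion of the conclusion to $\lambda \geq 1$, or reinterpret it as the explicit $\lambda$-dependent norm bounds displayed above, which is presumably the intended reading.
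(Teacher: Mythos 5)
Your proposal matches the paper's (implicit) proof: the paper offers no separate argument for this theorem, simply invoking the preceding observations --- the graph-decomposition argument for everywhere-definedness of $P_{\lambda}$, the displayed computations for positivity and symmetry, and the explicit WCT formulas for $P_{\lambda}$, $Q_{\lambda}$ and $TP_{\lambda}$ --- which are exactly the ingredients you assemble, with your conditional-H\"{o}lder estimate supplying the boundedness the paper leaves unstated. Your caveat about the word ``contraction'' is well taken: the bounds $\|P_{\lambda}\|\le 1/\lambda$ and $\|TP_{\lambda}\|\le 1/(2\sqrt{\lambda})$ are attained when $E(|u|^2)E(|w|^2)$ takes values near $0$ (respectively near $\lambda$), so the contraction claim as literally stated fails for small $\lambda>0$ and should be read either with your explicit $\lambda$-dependent norm bounds or restricted to $\lambda\ge 1$ (respectively $\lambda\ge 1/4$).
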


\section{\sc\bf Variational Regularization}

In this section we assume that $Tu=f$ is a solvable linear equation in a Hilbert space $\mathcal{H}$, $T$ is a linear, closed, densely defined operator in $\mathcal{H}$, which is not boundedly invertible. In \cite{ram} the authors obtained some results for unbounded densely defined operators on the Hilbert space $\mathcal{H}$ and then they applied the results to the variational regularization problem $F(u):=\|Tu-f\|^2+\lambda \|u\|^2=min$, in which $f\in \mathcal{H}$. In this section we determine and prove that the unique norm minimal-norm solution of the equation $Tu=f$ is $T^{\dagger}f$. First we discuss it for the WCT operators on the Hilbert space $L^2(\mu)$.\\

As we have obtained, if $T=M_wEM_u$ is densely defined, then $M_{t_{\lambda}}T^*$ is a densely defined, closed WCT operator. Now we have a nice relation between Moore-Penrose inverse of the WCT operator $T$ and a minimal norm solution of the equation $Tu=f$.
\begin{thm}\label{t3.1}
Let $T=M_wEM_u$ be a densely defined WCT operator on the Hilbert space $L^2(\mu)$. For every $f\in L^2(\mu)$, the linear functional $F(u)=\|Tu-f\|^2+\lambda\|u\|^2$ has a unique global minimizer $u_{\lambda}=M_{t_{\lambda}}T^{*}(f)$, where $\lambda>0$ and
$P_{\lambda}T^*(f)=Q_{\lambda}T^*(f)=M_{t_{\lambda}}T^*(f)$,
in which $t_{\lambda}=\frac{1}{\lambda+E(|u|^2)E(|w|^2)}$. If $f\in \mathcal{R}(T)$, then
$$\lim_{\lambda\rightarrow 0}\|u_{\lambda}-y\|=0,$$
where $u_{\lambda}$ is the unique global minimizer of $F(u)$ and $y$ is the minimal-norm solution to the equation $Tu=f$. In this case the minimal-norm solution is exactly the value of the Moore-Penrose inverse of $T$ at $f$, i.e., $T^{\dagger}(f)$.
\end{thm}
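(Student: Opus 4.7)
The plan is to treat this as a standard Tikhonov-regularization problem and carry out the quadratic minimization, then pass to the limit $\lambda\to 0$ using the explicit pointwise formulas developed just above the theorem. Since $T=M_wEM_u$ is densely defined, hence closed, $\lambda I + T^*T$ is everywhere defined, strictly positive, and boundedly invertible with inverse $P_\lambda$, as already established. On $\mathcal{D}(T)$ I would rewrite
$$F(u) = \langle (T^*T + \lambda I)u, u\rangle - 2\operatorname{Re}\langle T^*f, u\rangle + \|f\|^2$$
and, setting $u_\lambda := P_\lambda T^* f$, complete the square to get
$$F(u) - F(u_\lambda) = \langle (T^*T + \lambda I)(u - u_\lambda), u - u_\lambda\rangle \geq \lambda \|u - u_\lambda\|^2,$$
identifying $u_\lambda$ as the unique global minimizer, provided $u_\lambda\in\mathcal{D}(T)$. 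This last point is a short $L^2$ estimate using the bound $t_\lambda E(|u|^2)E(|w|^2)\leq 1$.

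Next I would verify $P_\lambda T^* f = M_{t_\lambda} T^* f$ by direct calculation: applying the already-derived resolvent expression
$$P_\lambda = \tfrac{1}{\lambda} I - M_{\frac{E(|w|^2)\bar u}{\lambda(\lambda + E(|u|^2)E(|w|^2))}} E M_u$$
to $T^* f = \bar u E(\bar w f)$ and simplifying, the two scalar coefficients combine to exactly $t_\lambda = 1/(\lambda + E(|u|^2)E(|w|^2))$. The identity $Q_\lambda T^* f = M_{t_\lambda} T^* f$ follows either from the analogous formula for $Q_\lambda$, or by taking adjoints in the relation $TP_\lambda = Q_\lambda T = M_{t_\lambda} T$ derived just above the theorem.

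For the convergence claim I first need to see that $T^\dagger f$ is the unique minimal-norm solution of $Tu = f$ when $f\in\mathcal{R}(T)$. Writing $f = Tv$ with $v\in\mathcal{D}(T)$, Definition~\ref{d2.7}(2) gives $T^\dagger f = T^\dagger T v = P_{\mathcal{N}(T)^\perp} v \in \mathcal{N}(T)^\perp$, while Definition~\ref{d2.7}(1) gives $TT^\dagger f = P_{\overline{\mathcal{R}(T)}} f = f$; any other solution differs from $T^\dagger f$ by an element of $\mathcal{N}(T)$ orthogonal to $T^\dagger f$, so $T^\dagger f$ has the strictly smallest norm. Together with Theorem~\ref{t2.9}, this yields the explicit form $T^\dagger f = \frac{\chi_S}{E(|u|^2)E(|w|^2)} \bar u E(\bar w f)$.

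Finally, the convergence $u_\lambda\to T^\dagger f$ is a dominated-convergence argument. Lemma~\ref{l2.1} applied to $T^* = M_{\bar u}EM_{\bar w}$ shows that $T^* f$ is supported in $S = S(E(|u|^2))\cap S(E(|w|^2))$, so both $u_\lambda$ and $T^\dagger f$ vanish off $S$; on $S$, $t_\lambda \to 1/(E(|u|^2)E(|w|^2))$ pointwise while the inequality $t_\lambda \leq 1/(E(|u|^2)E(|w|^2))$ yields $|u_\lambda|^2 \leq |T^\dagger f|^2$, and since $f\in\mathcal{R}(T)\subseteq \mathcal{D}(T^\dagger)$ the majorant is in $L^1$, so dominated convergence gives $\|u_\lambda - T^\dagger f\|_2 \to 0$. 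The main technical obstacle throughout is the careful handling of unbounded domains—verifying $u_\lambda\in\mathcal{D}(T)$ and making sense of $M_{t_\lambda}T^*$ on all of $L^2(\mu)$ rather than merely on $\mathcal{D}(T^*)$—but both concerns are resolved by the same bound $t_\lambda E(|u|^2)E(|w|^2)\leq 1$, and every other step reduces to the explicit WCT calculus already assembled in the paper.
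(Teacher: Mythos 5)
Your proof is essentially correct, but it takes a genuinely different route from the paper. The paper's entire proof is a two-line citation: it invokes Theorem 2 of Ramm's paper \cite{ram} (which establishes, for an arbitrary closed densely defined operator, that $u_\lambda=(T^*T+\lambda I)^{-1}T^*f$ is the unique minimizer of $F$ and converges to the minimal-norm solution as $\lambda\to 0$) and then combines it with Theorem \ref{t2.9} to identify the limit as $T^{\dagger}f$. You instead reprove the whole package from scratch using the explicit multiplication-operator structure of WCT operators: completing the square to exhibit the minimizer, computing $P_\lambda T^*f=M_{t_\lambda}T^*f$ from the resolvent formula, deriving the minimal-norm property of $T^\dagger f$ directly from Definition \ref{d2.7}, and proving the convergence by pointwise monotone domination ($t_\lambda\uparrow \chi_S/(E(|u|^2)E(|w|^2))$ plus dominated convergence). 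This buys a self-contained and genuinely more informative argument --- in particular your dominated-convergence step makes the mode of convergence completely transparent, which the black-box citation does not --- at the cost of some domain bookkeeping that you should tighten: for general $f\in L^2(\mu)$ not in $\mathcal{D}(T^*)$ the pairing $\langle T^*f,u\rangle$ in your expansion of $F$ should be written as $\langle f,Tu\rangle$ (equivalently, use the weak normal equation $\langle Tu_\lambda-f,Tv\rangle+\lambda\langle u_\lambda,v\rangle=0$ for $v\in\mathcal{D}(T)$); and the bound that makes $M_{t_\lambda}T^*=M_{t_\lambda\bar u}EM_{\bar w}$ everywhere defined and bounded is $t_\lambda\bigl(E(|u|^2)E(|w|^2)\bigr)^{1/2}\le \tfrac{1}{2\sqrt{\lambda}}$ (the operator norm of a WCT operator involves the square roots of the conditional expectations), not quite the inequality $t_\lambda E(|u|^2)E(|w|^2)\le 1$ you cite, although the latter is the right majorant for the final convergence step. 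Neither issue is a gap in substance.
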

\begin{proof}
By using Theorem 2 from \cite{ram} and Theorem \ref{t2.9} we get the proof.
\end{proof}

\begin{exam}
Let $X=[-1,1]$, $d\mu=\frac{1}{2}dx$ and $\mathcal{A}$ be the sigma algebra generated by symmetric intervals in $[0,1]$i.e., $\mathcal{A}=<\{(-a,a):0\leq a\leq1\}>$.

Then we have
 $$E^{\mathcal{A}}(f)(t)=\frac{f(t)+f(-t)}{2}, \ \ t\in X,$$
 where $E^{\mathcal{A}}(f)$ is defined. If $u(t)=e^{t}$ and $w(t)=\sin(t)+\cos(t)$, then
 $$E^{\mathcal{A}}(\mid u\mid^2)(t)=\cosh(2t),\ \ \ \ \ \ \ \ \ E^{\mathcal{A}}(\mid w\mid^2)(t)=1.$$
Also,
$$Tf(t)=M_wEM_u(f)(t)=(\sin(t)+\cos(t))(\frac{e^tf(t)+e^{-1}f(-t)}{2}).$$

It's clear that $T=M_wEM_u$ is densely defined. Since
$$E^{\mathcal{A}}(\mid u\mid^2)(t)E^{\mathcal{A}}(\mid w\mid^2)(t)=E^{\mathcal{A}}(\mid u\mid^2)(t),$$
then it's bounded away from zero on $[-1,1]$. Hence by Theorem \ref{t2.6} we get that $T$ has closed range. So $T^{\dagger}$ exists and

$$T^{\dagger}=M_{\frac{e^t}{\cosh(2t)}}EM_{\sin(t)+\cos(t)}.$$
Hence by Theorem \ref{t3.1} we get that the minimal norm solution of the equation:
$$(\sin(t)+\cos(t))(e^tf(t)+e^{-t}f(-t))=2g$$
is
$$f(t)=\frac{(\sin(t)+\cos(t))e^tg(t)+(\cos(t)-\sin(t))e^tg(-t)}{2\cosh(2t)}.$$

 Also, by Theorem \ref{t2.14} we have the characteristic matrix of $T$ as follow:

$$P=
\left(
  \begin{array}{cc}
    I-M_{\frac{e^{t}}{1+\cosh(2t)}}EM_{e^{t}} & M_{\frac{e^{t}}{1+\cosh(2t)}}EM_{\sin(t)+\cos(t)} \\
    M_{\frac{\sin(t)+\cos(t)}{1+\cosh(2t)}}EM_{e^{t}} & M_{\frac{\cosh(2t)(\sin(t)+\cos(t))}{1+\cosh(2\alpha t)}}EM_{\sin(t)+\cos(t)} \\
  \end{array}
\right).
 $$
  \end{exam}

\begin{exam} Let $X=[0,1]\times
[0,1]$, $d\mu=dtdt'$, $\Sigma$  the  Lebesgue subsets of $\Omega$ and
let $\mathcal{A}=\{A\times [0,1]: A \ \mbox{is a Lebesgue set in}
\ [0,1]\}$. Then, for each $f$ in $L^2(\Sigma)$, $(Ef)(t,
t')=\int_0^1f(t,s)ds$, which is independent of the second
coordinate. Hence for every finite valued measurable functions $u,w$ on $X$, the linear operator
$$T(f)(t,t')=w(t,t')\int^{1}_{0}u(t,s)f(t,s)ds$$
is densely defined on $L^2(X)$. Also, by Theorem \ref{t3.1} we get that the solution of each equation of the form:
$$w(t,t')\int^1_0u(t,s)f(t,s)ds=g(t,t')$$
is
$$f(t,t')=\frac{u(t,t')}{\int^{1}_{0}\mid w\mid^2(t,s)ds+\int^{1}_{0}\mid u\mid^2(t,s)ds}\int^1_0w(t,s)g(t,s)ds.$$

 This example is valid in the general case as follows:\\
Let $(X_1,\Sigma_1, \mu_1)$ and $(X_2,\Sigma_2, \mu_2)$ be two
$\sigma$-finite measure spaces and $X=X_1\times X_2$,
$\Sigma=\Sigma_1\times \Sigma_2$ and $\mu=\mu_1\times \mu_2$. Put
$\mathcal{A}=\{A\times X_2:A\in \Sigma_1\}$. Then $\mathcal{A}$ is
a sub-$\sigma$-algebra of $\Sigma$. Then for all $f$ in domain
$E^{\mathcal{A}}$ we have
$$E^{\mathcal{A}}(f)(t_1)=E^{\mathcal{A}}(f)(t_1,t_2)=\int_{X_2}f(t_1,s)d\mu_2(s) \ \ \
\mu-a.e.$$ on $X$.\\

Also, if $(X,\Sigma, \mu)$ is a finite measure space and
$u:X\times X\rightarrow \mathbb{C}$ and $w:X\rightarrow \mathbb{C}$ are finite valued measurable functions,
then the operator
$T:L^2(\Sigma)\rightarrow L^2(\Sigma)$ defined by
$$Tf(t)=w(t)\int_{X}u(t,s)f(s)d\mu(s), \ \ \ \ \ f\in L^2(\Sigma),$$
is called weighted kernel operator on $L^2(\Sigma)$). We show that $T$ is a
WCT operator.\cite{gd} For each $f\in L^2(\Sigma)$, let $f'(t_1,t_2)=f(t_2)$, for $t_1,t_2\in X$. If we set
$\mathcal{A}=\{A\times X:A\in \Sigma\}$. Then $\mathcal{A}$ is
a sub-$\sigma$-algebra of $\Sigma\times \Sigma$ and for every $f\in L^2(\Sigma)$, $f'$ is $\Sigma\times \Sigma$-measurable. Then for all $f$ in domain
$E^{\mathcal{A}}$ we have
$$E^{\mathcal{A}}(f')(t_1)=E^{\mathcal{A}}(f')(t_1,t_2)=\int_{X}f'(t_1,s)d\mu(s)= \int_{X}f(s)d\mu(s)\ \ \
\mu-a.e.$$ on $X$.\\

 It is clear that $u(t_1,t_2)f'(t_1,t_2)=u(t_1,t_2)f(t_2)$ is a $\Sigma\otimes \Sigma$-measurable function, when $f\in
L^2(\Sigma)$. Therefore we have
\begin{align*}
w(t)E^{\mathcal{A}}(uf)(t)&=E^{\mathcal{A}}(uf')(t,s)\\
&=w(t)\int_{X}u(t,t')f'(,t')d\mu(t')\\
&=w(t)\int_{X}u(t,t')f(t')d\mu(t')\\
&=Tf(t).\\
\end{align*}
Hence $T=M_wEM_u$, i.e, $T$ is a WCT operator. And since $w$ and $u$ are finite valued, then the WCT operator $T$ is densely defined.
Therefore, by Theorem \ref{t3.1} we can find the solution of Fredholm integral equations, i.e., the solution of the integral equation of the form

$$w(t)\int^{b}_{a}u(t,t')f(t')d\mu(t')=g(t), \ \ \ \ \ \ \ \ a,b,t\in \mathbb{R}$$
is
$$f(t)=\frac{u(t,t')}{\int^{b}_{a}\mid w\mid^2(s)ds+\int^{b}_{a}\mid u\mid^2(t,s)ds}\int^b_aw(s)g(s)ds.$$

This means all assertions of this paper are valid for a class of integral type operators.
\end{exam}

We recall that if $T$ is a linear densely defined closed operator on the Hilbert space $\mathcal{H}$, then the domain of $T^*$ is also dense in $\mathcal{H}$ and so $T^{**}=(T^*)^*$ exists and $T^{**}=T$. So in the next proposition we find a limit for a unique global minimizer of a linear functional
\begin{prop}\label{p3.2}
Let $\mathcal{H}$ be a Hilbert space $T$ be a linear, unbounded and densely defined operator in $\mathcal{H}$. For any $x\in \mathcal{H}$ if we set $R_{\lambda}(x):=u_{\lambda}=T^*Q_{\lambda}(x)$, then
$$\lim_{\lambda\rightarrow 0}R_{\lambda}(x)=\lim_{\lambda\rightarrow 0}u_{\lambda}=T^{\dagger}(x).$$
\end{prop}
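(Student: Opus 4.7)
The plan is to recognize $R_\lambda(x)=T^*Q_\lambda x$ as (the bounded extension of) the classical Tikhonov regularizer $(\lambda I+T^*T)^{-1}T^*x$, and then invoke the convergence theorem for Tikhonov regularization of closed densely defined operators (Theorem~2 of \cite{ram}), just as was done in the proof of Theorem~\ref{t3.1}. The necessary intertwining is
\[
(\lambda I+T^*T)T^*\subseteq T^*(\lambda I+TT^*),
\]
which on $\mathcal{D}(T^*)$ rearranges to $P_\lambda T^*=T^*Q_\lambda$, where $P_\lambda:=(\lambda I+T^*T)^{-1}$ and $Q_\lambda:=(\lambda I+TT^*)^{-1}$ are bounded and everywhere defined by the discussion preceding Theorem~\ref{t2.14}. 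Because $T^*Q_\lambda$ is bounded, it is the unique continuous extension of $P_\lambda T^*$ to all of $\mathcal{H}$, so $R_\lambda(x)$ coincides with the unique global minimizer of $F(u)=\|Tu-x\|^2+\lambda\|u\|^2$ obtained from the normal equation $(\lambda I+T^*T)u=T^*x$.

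Next I would split $x=x_1+x_2$ along the orthogonal decomposition $\mathcal{H}=\overline{\mathcal{R}(T)}\oplus \mathcal{N}(T^*)$, using $\mathcal{R}(T)^\perp=\mathcal{N}(T^*)$ for closed $T$. The subspace $\mathcal{N}(T^*)$ is reduced by $TT^*$, hence by $Q_\lambda$, so $T^*Q_\lambda x_2=0=T^\dagger x_2$ by property~(3) of Definition~\ref{d2.7}. For the range piece, write $x_1=Tw$ with $w\in\mathcal{D}(T^*T)$ (this subspace is a core for $T$). Then
\[
R_\lambda(x_1)=P_\lambda T^*Tw=(I-\lambda P_\lambda)w,
\]
and the functional calculus of the self-adjoint operator $T^*T$ combined with dominated convergence for the uniformly bounded family $\lambda/(\lambda+s)\downarrow \chi_{\{0\}}(s)$ gives $\lambda P_\lambda w\to E_{T^*T}(\{0\})w=P_{\mathcal{N}(T)}w$ as $\lambda\to 0^+$. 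Hence $R_\lambda(x_1)\to P_{\mathcal{N}(T)^\perp}w=T^\dagger Tw=T^\dagger x_1$ by property~(2) of Definition~\ref{d2.7} together with $\mathcal{N}(T^*T)=\mathcal{N}(T)$. Linearity combines the two pieces.

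The main obstacle is one of interpretation rather than of substance: strictly speaking $T^\dagger$ is defined only on the dense subspace $\mathcal{D}(T^\dagger)=\mathcal{R}(T)\oplus\mathcal{R}(T)^\perp$, which coincides with $\mathcal{H}$ precisely when $\mathcal{R}(T)$ is closed, so the phrase ``for any $x\in\mathcal{H}$'' in the statement should be read as ``for any $x\in\mathcal{D}(T^\dagger)$''. A secondary, genuinely technical step is extending the convergence from range elements $Tw$ with $w\in\mathcal{D}(T^*T)$ to arbitrary $x_1\in\mathcal{R}(T)$, which requires a uniform-boundedness argument on $\mathcal{D}(T^\dagger)$ together with the density of $\mathcal{D}(T^*T)$ in $\mathcal{D}(T)$ in the graph norm; this is precisely the content of the abstract convergence result proved in \cite{ram}, so once $R_\lambda(x)=T^*Q_\lambda x$ has been identified as the Tikhonov regularizer the proposition follows by direct citation.
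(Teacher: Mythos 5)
Your proposal is correct in substance but follows a genuinely different route from the paper. The paper's proof is purely algebraic: it computes $TR_{\lambda}=I-\lambda Q_{\lambda}$, lets $\lambda\to 0$, and then verifies that the limit operator $S=\lim_{\lambda\to 0}R_{\lambda}$ satisfies $TST=T$, $(ST)^*=ST$, $ST=P_{\overline{\mathcal{R}(S)}}$ and $TS=P_{\overline{\mathcal{R}(T)}}$, concluding $S=T^{\dagger}$; crucially, it \emph{presupposes} that the strong limit $S$ exists, which is exactly the delicate point, and its intermediate claim $\lim_{\lambda\to 0}R_{\lambda}^*T^*=I$ is not quite right as stated (since $\lambda Q_{\lambda}\to P_{\mathcal{N}(T^*)}$ strongly, the limit is $P_{\overline{\mathcal{R}(T)}}$ restricted to $\mathcal{D}(T^*)$, though the consequence $TST=T$ survives). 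You instead decompose $x$ along $\overline{\mathcal{R}(T)}\oplus\mathcal{N}(T^*)$ and compute the limit explicitly on each piece via the spectral calculus of $T^*T$, which actually \emph{proves} convergence on $\mathcal{D}(T^{\dagger})=\mathcal{R}(T)\oplus\mathcal{R}(T)^{\perp}$ and correctly exposes the domain restriction that the paper's ``for any $x\in\mathcal{H}$'' glosses over (the limit diverges for $x\in\overline{\mathcal{R}(T)}\setminus\mathcal{R}(T)$ when the range is not closed). One small simplification is available to you: the identity $T^*Q_{\lambda}Tw=w-\lambda P_{\lambda}w$ holds for every $w\in\mathcal{D}(T)$, not only for $w\in\mathcal{D}(T^*T)$, because $TP_{\lambda}w=Q_{\lambda}Tw$ and $T^*TP_{\lambda}w=w-\lambda P_{\lambda}w$ make sense directly; hence your ``secondary technical step'' of extending from $T(\mathcal{D}(T^*T))$ to all of $\mathcal{R}(T)$ is unnecessary, and the appeal to \cite{ram} can be dropped entirely. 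Net effect: your argument is longer but self-contained and more honest about where the limit exists; the paper's is shorter but assumes the hard part.
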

\begin{proof}
It is easy to see that the operator $T^*Q_{\lambda}$ is a bounded operator. And direct computations show that
$$TR_{\lambda}=I-\lambda Q_{\lambda}$$
and so
$$R^*_{\lambda}T^*=I-\lambda Q_{\lambda}.$$
Hence we get that
$$\lim_{\lambda\rightarrow 0} R^*_{\lambda}T^*=I.$$
This implies that
$$T^*\lim_{\lambda\rightarrow 0} R^*_{\lambda}T^*=T^*$$
and therefore
$$TST=T,$$
in which $S=\lim_{\lambda\rightarrow 0}R_{\lambda}$. Similarly we get that
$$(ST)^*=ST$$
and also we have $ST=P_{\overline{\mathcal{R}(S)}}$ and $TS=P_{\overline{\mathcal{R}(T)}}$.
All these properties confirm that $S=T^{\dagger}$ and so $\lim_{\lambda\rightarrow 0}u_{\lambda}=T^{\dagger}(x)$.

\end{proof}
Now in the net theorem under the assumptions of the Proposition \ref{p3.2} we get that the value of the Moore-Penrose inverse of an unbounded operator $T$ at the point $f$ is a minimal norm solution of the equation $Tu=f$.
\begin{thm}
Let $f\in \mathcal{H}$ and $F(u)=\|Tu-f\|^2+\lambda\|u\|^2$. Then the linear functional $F$ has a unique global minimizer $u_{\lambda}=T^*Q_{\lambda}$, where $\lambda>0$, $Q_{\lambda}=(TT^*+\lambda I)^{-1}$, $P_{\lambda}=(T^*T+\lambda I)^{-1}$ and
$$T^*Q_{\lambda}f=P_{\lambda}T^*f,$$
where $P_{\lambda}$ is the closure of $P_{\lambda}T^*$ that is defined on $\mathcal{D}(T^*)$. If $f\in \mathcal{R}(T)$, then
$$\lim_{\lambda\rightarrow 0}\|u_{\lambda}-T^{\dagger}f\|=0,$$
i.e., $T^{\dagger}f$ is the minimal norm solution to the equation $Tu=f$.
\end{thm}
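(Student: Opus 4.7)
The plan is to split the theorem into three independent parts: (i) existence, uniqueness, and the formula for the minimizer; (ii) the intertwining identity $T^{*}Q_{\lambda}=P_{\lambda}T^{*}$ together with its interpretation via closure; (iii) the limit as $\lambda\to 0$. Part (iii) follows almost verbatim from Proposition \ref{p3.2}, so the main work is in (i) and (ii).

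For (i), I would first record that, exactly as in the discussion preceding Theorem \ref{t2.14}, the operators $T^{*}T+\lambda I$ and $TT^{*}+\lambda I$ are strictly positive, self-adjoint and bounded below by $\lambda$, hence boundedly invertible; in particular $P_{\lambda}$ and $Q_{\lambda}$ are everywhere-defined bounded positive operators, and $\mathcal{R}(Q_{\lambda})\subseteq \mathcal{D}(TT^{*})\subseteq \mathcal{D}(T^{*})$, so $T^{*}Q_{\lambda}$ is everywhere defined and bounded. Strict convexity and coercivity of $F$ (the quadratic form $\lambda\|u\|^{2}+\|Tu\|^{2}$ is bounded below by $\lambda\|u\|^{2}$) give existence and uniqueness of a global minimizer $u_{\lambda}$. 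Computing $\tfrac{d}{dt}F(u_{\lambda}+tv)|_{t=0}=0$ for all $v\in\mathcal{D}(T)$ leads to $(T^{*}T+\lambda I)u_{\lambda}=T^{*}f$, formally giving $u_{\lambda}=P_{\lambda}T^{*}f$; the clean everywhere-defined representative of this same object is $u_{\lambda}=T^{*}Q_{\lambda}f$, once (ii) is proved.

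For (ii), the key algebraic identity is $(T^{*}T+\lambda I)T^{*}\supseteq T^{*}(TT^{*}+\lambda I)$ as unbounded operators on $\mathcal{D}(TT^{*})$. Applying $P_{\lambda}$ on the left and $Q_{\lambda}$ on the right then yields $T^{*}Q_{\lambda}=P_{\lambda}T^{*}$ on $\mathcal{D}(T^{*})$. Since the left-hand side is bounded and defined on all of $\mathcal{H}$ while $\mathcal{D}(T^{*})$ is dense (because $T$ is closed and densely defined, so $T^{**}=T$ forces $T^{*}$ to be densely defined), $T^{*}Q_{\lambda}$ is precisely the closure of the densely defined operator $P_{\lambda}T^{*}$. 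This legitimizes the notation $P_{\lambda}T^{*}f$ for every $f\in\mathcal{H}$ in the theorem statement.

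Finally, for (iii), I would invoke Proposition \ref{p3.2} directly: it already establishes that $\lim_{\lambda\to 0}R_{\lambda}(f)=T^{\dagger}f$ for every $f$ in $\mathcal{D}(T^{\dagger})=\mathcal{R}(T)\oplus\mathcal{R}(T)^{\perp}$, and in particular for $f\in\mathcal{R}(T)$. Combined with the properties in Definition \ref{d2.7}, the limit $T^{\dagger}f$ satisfies $T(T^{\dagger}f)=P_{\overline{\mathcal{R}(T)}}f=f$ (using $f\in\mathcal{R}(T)$) and lies in $\mathcal{N}(T)^{\perp}$, which is exactly the characterization of the minimal-norm solution of $Tu=f$. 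The only delicate point I foresee is verifying that $T^{*}Q_{\lambda}$ really is bounded and everywhere defined (so that (i) is not merely formal); once that is in place, (ii) and (iii) are short.
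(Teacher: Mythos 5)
Your proposal is correct in outline, but it takes a genuinely different route from the paper: the paper's entire proof is the single sentence ``It is a direct consequence of Theorem 2 of \cite{ram}'', i.e., everything is deferred to Ramm's regularization theorem for closed densely defined unbounded operators, whereas you reconstruct that theorem from scratch. Your decomposition into (i) well-posedness of the Tikhonov functional via self-adjointness of $T^*T+\lambda I$ and strict convexity, (ii) the intertwining $T^*Q_\lambda=P_\lambda T^*$ on $\mathcal{D}(T^*)$ with the bounded everywhere-defined operator $T^*Q_\lambda$ as its closure, and (iii) the limit via Proposition \ref{p3.2}, is exactly the content hidden behind the citation, and each step can be completed. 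In particular, the boundedness you flag as delicate follows from $\|T^*Q_\lambda f\|^2=\langle TT^*Q_\lambda f,Q_\lambda f\rangle\le\langle f,Q_\lambda f\rangle\le\lambda^{-1}\|f\|^2$; and since for $f\notin\mathcal{D}(T^*)$ the normal equation $(T^*T+\lambda I)u_\lambda=T^*f$ is, as you say, only formal, the minimizer property of $u=T^*Q_\lambda f$ is best verified directly from the Euler--Lagrange condition, noting that $Tu-f=-\lambda Q_\lambda f\in\mathcal{D}(T^*)$ and $T^*(Tu-f)=-\lambda u$. Two small points to tighten: existence of the minimizer requires weak lower semicontinuity of $F$ on $\mathcal{D}(T)$, which should be derived from closedness of the graph of $T$ (a closed convex set is weakly closed) rather than from coercivity and strict convexity alone, which only give uniqueness; and your step (iii) inherits whatever weakness Proposition \ref{p3.2} has, since its proof in the paper interchanges $\lambda\to 0$ limits with the unbounded operator $T^*$ without justification. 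What your approach buys is a self-contained and checkable argument that makes these hypotheses visible; what the paper's citation buys is brevity at the cost of opacity.
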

\begin{proof}It is a direct consequence of Theorem 2 of \cite{ram}.
\end{proof}


\begin{thebibliography}{99}
\bibitem{bi} Ben-Israel Adi and Greville Thomas N E, Generalized inverses, second ed., CMS Books
in Mathematics Ouvrages de Mathematiques de la SMC, (2003).

\bibitem{kn} S. H. Kulkarni, M. T. Nair and G. Ramesh, Some properties of unbounded operators with closed range, Proc. Indian Acad. Sci (Math. Svi) {\bf 4} (2008), 613-625.



(1998), 293 - 306.




\bibitem{dou}
 R. G. Douglas, Contractive projections on an $L^{1}$ space,
 Pacific J. Math. {\bf 15} (1965), 443-462.

\bibitem{y1} Y. Estaremi, On the algebra of WCE operators." Rocky Mountain J. Math. {\bf 48} (2013), 501 -- 517.

\bibitem{yc} Y. Estaremi, Unbounded Weighted Conditional Expectation Operators, Complex Analysis and Operator Theory volume {\bf 10}(2016), 567--580.

\bibitem{ej1} Y. Estaremi and M.R. Jabbarzadeh, Weighted lambert type operators on
$L^{p}$-spaces, Oper. Matrices {\bf 1} (2013), 101-116.

\bibitem{ye1} Y. Estaremi, Fredholm multiplication conditional type opeators on $L_p$-spaces, arXiv:1312.7409



\bibitem{gd} J.J. Grobler and B. de Pagter, Operators representable as
multiplication conditional expectation operators, Journal of
Operator Theory {\bf 48} (2002), 15-- 40.

\bibitem{her}
J. Herron, Weighted conditional expectation operators, Oper.
Matrices {\bf 1} (2011), 107-118.

\bibitem{js} M. R. Jabbarzadeh and Sohrabi,


\bibitem{k} J.L. Kelley, Averaging operators on $C_{\infty}(X)$, Illinois J.
Math. {\bf 2} (1958), 214 - 223.

\bibitem{lam} A. Lambert, conditional expectation related characterizations of the commutant of an abelian $W^*$-algebra, Far East J. of Math. Sciences {\bf 2} (1994), 1-7.
\bibitem{xy} X. Liu and Y. Estaremi, Basic properties of unbounded weighted conditional type operators, Filomat, preprint, 2021.
%
\bibitem{mo}
-Teh Chen, Moy,  Characterizations of conditional expectation
as a transformation on function spaces, Pacific J. Math. {\bf 4}
(1954), 47-63.
%
%
%
\bibitem{ram} A. G. Ramm, On unbounded operators and applications, App. Math. Lett. {\bf 21} (2008), 377-382.


\bibitem{rao}
M. M. Rao, Conditional measure and applications, Marcel Dekker,
New York, 1993.
%

\bibitem{ft} F. Tiang, On commutativity of unbounded operators in Hilbert space, PhD thesis, University of Iowa, 2011. https://doi.org/10.17077/etd.ichthf53.
%
%



\end{thebibliography}
\end{document}